\DeclareMathOperator{\trace}{Tr}
\DeclareMathOperator{\rank}{rank}
\renewcommand{\vec}[1]{\mathbf{#1}}
\newtheorem{remark}{Remark}[section]
\begin{document}
\title[Parallel Simultaneous Perturbation Optimization]{Parallel Simultaneous Perturbation Optimization}

\author{Atiye Alaeddini}

\author{Daniel J.~Klein}

\begin{abstract}
 Stochastic computer simulations enable users to gain new insights into complex physical systems. Optimization is a common problem in this context: users seek to find model inputs that maximize the expected value of an objective function. The objective function, however, is time-intensive to evaluate, and cannot be directly measured. Instead, the stochastic nature of the model means that individual realizations are corrupted by noise. More formally, we consider the problem of optimizing the expected value of an expensive black-box function with continuously-differentiable mean, from which observations are corrupted by Gaussian noise. We present Parallel Simultaneous Perturbation Optimization (PSPO), which extends a well-known stochastic optimization algorithm, simultaneous perturbation stochastic approximation, in several important ways. Our modifications allow the algorithm to fully take advantage of parallel computing resources, like high-performance cloud computing. The resulting PSPO algorithm takes fewer time-consuming iterations to converge, automatically chooses the step size, and can vary the error tolerance by step. Theoretical results are supported by a numerical example. To demonstrate the performance of the algorithm, we implemented the algorithm to maximize the pseudo-likelihood of a stochastic epidemiological model to data of a measles outbreak.
\end{abstract}

%
%
%

%
%


\maketitle


\section{Introduction}\label{intro}

Stochastic optimization is of core practical importance in many fields of science and engineering. For instance, epidemiological systems involve individuals subject to exogenous disturbances, which emphasizes the need for models and methods capable of dealing with stochasticity. While many optimization methodologies exist for deterministic systems, these algorithms can give misleading results when applied to stochastic systems. Closed-form solutions do not generally exist for stochastic optimization problems, thus we seek an iterative algorithm which guarantees convergence to the local optimal point. 

Many approximation algorithms have been developed to solve a wide variety of deterministic or stochastic problems. The steepest descent method \cite{Nocedal99} is the most prominent iterative method for optimizing a complex objective function. The gradient-based algorithms, such as Robbins-Monro \cite{robbins1951stochastic}, Newton-Raphson \cite{froberg1969introduction}, and neural network back-propagation \cite{rumelhart1985learning}, rely on direct measurements of the gradient of the objective function with respect to the optimization parameter. But, in many cases the gradient of the loss function is not available. This is a common occurrence, for example, in complex systems, such as the optimization problems given in \cite{tsilifis2017efficient,alaeddini2017application}, the exact functional relationship between the loss function value and the parameters is not known, and the loss function is evaluated by measurements on the system or by running simulation. A review on the main areas of optimization via simulation can be found in \cite{fu1994optimization,hong2009brief,swisher2000survey}.

Some algorithms have been developed specifically to optimize stochastic black-box cost functions. Of these, the Kiefer-Wolfowitz algorithm \cite{kiefer1952stochastic} is perhaps the most well known. This algorithm estimates the gradient from noisy measurements using finite differencing. Another well known stochastic optimization algorithm in the case of high dimensional problems is Simultaneous Perturbation Stochastic Approximation (SPSA), which is an approximation algorithm based on simultaneous perturbation \cite{spall1992multivariate}. Later, J.~C.~Spall presented a second-order variant of SPSA \cite{spall2000adaptive}. The SPSA algorithm is used extensively in many different areas, e.g.\, signal timing for traffic control \cite{ma2013solving}, and some large scale machine learning problems \cite{byrd2011use}. The convergence of this algorithm to the optimal value in the stochastic \emph{almost sure} sense makes it suitable in many applications.

The stochastic nature of some complex models, e.g.\, epidemiological models, means that each set of parameters maps to a distribution of outcomes, from which each sample (model run) can take several hours to obtain. In the quick-to-evaluate deterministic model setting, almost any classical optimization algorithm, such as steepest descent or Newton-Raphson, can be used. However, care must be taken when applying deterministic methods to stochastic objective functions, as the inherent noise causes unexpected behavior. In this paper, we introduce PSPO, an algorithm for optimization of stochastic objective functions. Despite many advantageous properties of the SPSA algorithm, mentioned earlier, it is a serial algorithm that evaluates the (stochastic) function a few points at a time, which results in low convergence rate. Researchers have looked at ways of enhancing the convergence of the SPSA algorithm, e.g.\, iterate averaging is an approach aimed at achieving higher convergence rate in a stochastic setting \cite{polyak1992acceleration}, using deterministic parameter perturbations instead of random perturbations \cite{bhatnagar2003two}, and more \cite{kocsis2006universal,spall2009feedback}.

The PSPO algorithm, introduced in this paper, takes advantage of fundamentally-parallel resources like high-performance cloud-based computing. Our method is appropriate for problems with very noisy gradients. We also derive a relationship between the number of parallel rounds of computation and error tolerance of the gradient for each iteration. The main contribution of this paper is introducing a stochastic optimization algorithm which can be easily implemented on parallel computers, and to provide the minimum number of parallel computers in order to have an upper-bound on the error for each iteration. We demonstrate that PSPO works well in practice and compares favorably to the conventional simultaneous perturbation optimization algorithm (SPSA). The rest of the paper is structured as follows. In \cref{sec:prelim}, we give a review of the simultaneous perturbation optimization algorithm. The Parallel Simultaneous Perturbation Optimization (PSPO) algorithm is presented in \cref{sec:SPSApp}. The numerical simulations are given in \cref{sec:sims}, and \cref{sec:conclusion} concludes the paper.

\section{Preliminaries} \label{sec:prelim}

Consider the problem of minimizing a cost function $L(\theta): \mathbb{R}^p \rightarrow \mathbb{R}$. Spall \cite{spall1992multivariate,spall2000adaptive,spall2009feedback} presented an efficient stochastic algorithm called Simultaneous Perturbation Stochastic Approximation (SPSA). The SPSA algorithm estimates the gradient and the Hessian matrix by finite difference. This algorithm basically consists of two parallel recursions for estimating the optimization parameter, $\theta$, and the Hessian matrix, $H(\theta)$. The first recursion is a stochastic equivalence of the Newton-Raphson algorithm, and the second one estimates the Hessian matrix. These two recursions are
\begin{equation} \label{spsa2}
\begin{aligned}
	&\theta_{k+1} = \theta_{k} - a_k \left( \Pi_{\mathcal{P}}  (\bar{H}_k)  \right)^{-1} \hat{\vec{g}}_k(\theta_k) \,, \\
	&\bar{H}_k =  \frac{k}{k+1} \bar{H}_{k-1} + \frac{1}{k+1} \hat{H}_{k} \,, 
\end{aligned}
\end{equation}
where $a_k$ is a positive scalar factor, $\mathcal{P}$ denotes the set of all positive definite matrices, and $\Pi_{\mathcal{P}} (\cdot)$ is the projection into the admissible set $\mathcal{P}$. Here, $\hat{\vec{g}}_k$ and $\hat{H}_{k}$ are the estimated gradient and Hessian at iteration $k$. The SPSA approach for estimating the $\hat{\vec{g}}_k(\theta)$ and $\hat{H}_k(\theta)$ follows.

Let $\Delta_k \in \mathbb{R}^p$ be vectors of $p$ mutually independent zero-mean random variables satisfying the condition of $E\{\Delta_k^{-1} \}$ be bounded. An admissible distribution is a Bernoulli $\pm 1$ distribution. The positive scalars $c_k$ are chosen such that they usually get smaller as $k$ gets larger. The two-sided estimate of the gradient at iteration $k$ is given by:
\begin{equation} \label{est_g_1st}
	\hat{\vec{g}}_k(\theta_k) =  \frac{y_k^{(+)} - y_k^{(-)} }{2c_k} \Delta_k^{-1} \,, 
\end{equation}
where $y_k^{(\pm)}$ are the noisy measurements of the cost function at $\theta_k \pm c_k \Delta_k$. Note that in \eqref{est_g_1st}, $\Delta_k^{-1}$ is the element-wise inverse of $\Delta_k$. In the case of second order SPSA, it is suggested to use a one-sided gradient approximation, given by:
\begin{equation} \label{est_g_1sided}
	\hat{\vec{g}}_k(\theta_k) =  \frac{y_k^{(+)} - y_k }{c_k} \Delta_k^{-1} \,.
\end{equation}

Now let $\tilde{\Delta}_k \in \mathbb{R}^p$ be vectors of $p$ mutually independent zero-mean random variables satisfying the same condition of $\Delta_k$. The positive scalars $\tilde{c}_k$ are also chosen such that they get smaller as $k$ gets larger. The numerical value of $\tilde{c}_k$ is suggested to be chosen smaller than $c_k$. The following formula gives a per-iteration estimate of the Hessian matrix.
\begin{equation} \label{est_H}
	\hat{H}_k =  \frac{1}{2} \left[ \frac{\delta G_k}{2\tilde{c}_k} \tilde{\Delta}_k^{-1} + \left( \frac{\delta G_k}{2\tilde{c}_k} \tilde{\Delta}_k^{-1} \right)^T \right] \,, 
\end{equation}
where 
\begin{equation} 
	\delta G_k = \hat{\vec{g}}_k(\theta_k + \tilde{c}_k \tilde{\Delta}_k ) - \hat{\vec{g}}_k(\theta_k - \tilde{c}_k \tilde{\Delta}_k ) \,.
\end{equation}

The practical implementation details of the algorithm is presented in \cite{spall2000adaptive}.

\section{Parallel Simultaneous Perturbation Optimization: PSPO}  \label{sec:SPSApp}

While SPSA is an efficient optimization algorithm for high dimensional problems, it requires many iterations to converge, particularly for high-noise problems. In the case of high uncertain problems, the computation of the gradient and Hessian from noisy objective function evaluations benefits from more evaluations.  The idea here is using parallel computing of the gradient and Hessian with different perturbation vectors, and use the estimated gradient and Hessian in a conjugate gradient type optimization algorithm.

\subsection{Gradient Estimation}

The gradient vector at each iteration can be obtained by performing multiple evaluations of \eqref{est_g_1sided} for different values of perturbation. Let $\vec{g}$ be the gradient at a point $\theta$, and $\vec{g}^i$ represents the directional gradient in $\Delta_i$ direction at this point. Then we know that:
$$ \vec{g}^i = \frac{\left< \vec{g}(\theta), \Delta_i \right>}{\| \Delta_i \|^2} \Delta_i \,.$$
Let $\hat{\vec{g}}^i$ be the estimated gradient using the one-sided gradient approximation \eqref{est_g_1sided} and perturbation vector $\Delta_i$. If $\Delta_i$ is a Bernoulli $\pm 1$ vector, we have
$$ \hat{\vec{g}}^i \approx p \vec{g}^i = \left< \vec{g}(\theta), \Delta_i \right> \Delta_i \approx \left< \hat{\vec{g}}(\theta), \Delta_i \right> \Delta_i \,.$$
Then,
$$ \hat{\vec{g}}^T \Delta_i \approx \frac{\delta f_i}{c} \,,$$
$$ \delta f_i = L(\theta+c\Delta_i) - L(\theta)\,,$$
and then,
\begin{equation} \label{gradLinEq} 
\hat{\vec{g}}^T \Delta \approx \frac{1}{c} \begin{bmatrix} \delta f_1& \delta f_2& \cdots& \delta f_M \end{bmatrix}\,,
\end{equation}
where,
$$ \Delta = \begin{bmatrix} \Delta_1& \Delta_2& \cdots& \Delta_M \end{bmatrix}\,.$$
If $\{\Delta_i\}_{1\leq i\leq M}$ span $\mathbb{R}^p$, then $\Delta \Delta^T$ is invertible. So, the least square estimation of $\hat{\vec{g}}$ is given by:
\begin{equation} \label{MLS_grad}
\hat{\vec{g}} \approx \frac{1}{c} \left( \Delta \Delta^T \right)^{-1} \Delta \begin{bmatrix} \delta f_1 & \delta f_2 & \hdots & \delta f_M \end{bmatrix}^T\,.
\end{equation}

On the other hand, if $M<p$, then \eqref{gradLinEq} is an under-determined system which has non-unique solutions. Then, the solution of the minimum Euclidean norm, $\displaystyle \|\hat{\vec{g}}\|_{2}$, among all solutions is given by:
\begin{equation} \label{MN_grad}
\hat{\vec{g}} \approx \frac{1}{c} \Delta \left( \Delta^T \Delta \right)^{-1} \begin{bmatrix} \delta f_1 & \delta f_2 & \hdots & \delta f_M \end{bmatrix}^T\,.
\end{equation}

The Parallel Simultaneous Perturbation (PSP) algorithm for estimating the gradient at a given point is given in Algorithm \ref{sp_pp}. In order to improve the efficiency of this algorithm, we can compute $L(\vec{\theta})$ once, out of the loop. Doing this, we need $M+1$ function evaluations for estimating the gradient.

\begin{algorithm}
\begin{algorithmic}
\STATE{Inputs: given point $\theta$, perturbation size $c$, and \# of parallel rounds $M$}
\STATE{Randomly sample $\Delta_0$ from $\{\pm 1\}$ binary distribution}
\STATE{Initialize computation round counter $i:=1$ and spare counter $j:=1$}
\WHILE{$i \leq M$}
 	\STATE{$ \Delta_i := (I-2\vec{e}_j \vec{e}_j^T) \Delta_0$}
	\STATE{$ \delta f_i(\theta) := f(\theta + c \Delta_i) - f(\theta) $}
	\IF{$(i\mod p) = 0$}
      		\STATE{Reset $j := 0$ and re-sample $\Delta_0$ from $\{\pm 1\}$ binary distribution}
	\ENDIF
	\STATE{$i := i+1$; $j := j+1$}
\ENDWHILE
\STATE{$ \Delta := \begin{bmatrix} \Delta_1& \Delta_2& \hdots & \Delta_M\end{bmatrix}$}
\IF{$M\geq p$}
      		\STATE $ \hat{\vec{g}}(\theta) :=\frac{1}{c} \left(\Delta \Delta^T \right)^{-1} \Delta \begin{bmatrix} \delta f_1 & \delta f_2 & \hdots & \delta f_M\end{bmatrix}^T$
\ELSE
		\STATE $ \hat{\vec{g}}(\theta) :=\frac{1}{c} \Delta \left(\Delta^T \Delta \right)^{-1} \begin{bmatrix} \delta f_1 & \delta f_2 & \hdots & \delta f_M\end{bmatrix}^T$
\ENDIF
\RETURN $\hat{\vec{g}}(\theta)$
\end{algorithmic}
 \caption{Parallel Simultaneous Perturbation (PSP) gradient estimation.} \label{sp_pp}
\end{algorithm}

In Algorithm \ref{sp_pp}, we suggest an efficient strategy to choose independent $\Delta$ vectors. First, we sample $\Delta_0$ from a Bernoulli $\pm 1$ distribution. In round $j$, $1 \leq j \leq p$, we need to switch the sign of the $j$th element in $\Delta_{0}$ to generate $\Delta_{j}$. The same procedure repeats for the next $p$ rounds of computations.

\begin{lemma}
Given the suggested procedure in Algorithm \ref{sp_pp} for choosing $\Delta_i$ vectors, the vectors $\{\Delta_j\}_{1\leq j\leq p}$ spans $\mathbb{R}^p$ for all $p\neq 2$.
\end{lemma}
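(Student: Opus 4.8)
The plan is to show that the $p \times p$ matrix $\Delta = [\Delta_1 \ \cdots \ \Delta_p]$ formed from a single sample $\Delta_0$ is nonsingular precisely when $p \neq 2$; since a square matrix has full column rank iff it is invertible, this is equivalent to the spanning claim. First I would write $\Delta_0 = (s_1, \ldots, s_p)^T$ with each $s_i \in \{\pm 1\}$ and observe that the reflection $(I - 2\vec{e}_j \vec{e}_j^T)$ merely flips the sign of the $j$th coordinate, so that $\Delta_j = \Delta_0 - 2 s_j \vec{e}_j$. Collecting these as the columns of $\Delta$ gives the compact form $\Delta = \Delta_0 \mathbf{1}^T - 2D$, where $\mathbf{1}$ is the all-ones vector and $D = \diag(s_1, \ldots, s_p)$.

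The key simplification comes from $s_i^2 = 1$, which gives $D^{-1} = D$ and $D\Delta_0 = \mathbf{1}$. Factoring $-2D$ out on the left then exposes a rank-one update of the identity, $\Delta = -2D\,(I - \tfrac{1}{2}\mathbf{1}\mathbf{1}^T)$, at which point I would apply the matrix determinant lemma $\det(I + uv^T) = 1 + v^T u$ with $u = -\tfrac{1}{2}\mathbf{1}$ and $v = \mathbf{1}$ to obtain $\det(I - \tfrac{1}{2}\mathbf{1}\mathbf{1}^T) = 1 - \tfrac{p}{2}$. Combining this with $\det(-2D) = (-2)^p \prod_i s_i$ yields $\det(\Delta) = (-2)^p \bigl(\prod_i s_i\bigr)\bigl(1 - \tfrac{p}{2}\bigr)$, which is nonzero for every realization of the signs $s_i$ exactly when $p \neq 2$. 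Hence $\{\Delta_j\}_{1 \le j \le p}$ spans $\mathbb{R}^p$ whenever $p \neq 2$.

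The computation is essentially routine once the structure is recognized; the only step that requires genuine care is identifying the reflection as a coordinate sign-flip and using $D\Delta_0 = \mathbf{1}$ to convert the opaque determinant of $\Delta_0 \mathbf{1}^T - 2D$ into a rank-one perturbation of the identity. Everything else follows from standard determinant identities. For completeness I would also record the degenerate case $p = 2$, where $\Delta_2 = -\Delta_1$, making the linear dependence explicit and confirming that the excluded value is genuinely necessary.
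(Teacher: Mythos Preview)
Your proof is correct and takes a genuinely different route from the paper's. The paper argues by induction on $p$: it checks $p=1$ and $p=3$ explicitly, then shows that appending the $(k{+}1)$-st column to $\Delta^{(k)}$ increases the rank by one, by reducing a putative linear dependence to the system $\sum_{i}\alpha_i - 2\alpha_j = 0$ for all $j$ and observing this forces $\alpha=0$ when $k\neq 1$. Your approach instead recognizes the global structure $\Delta = -2D\bigl(I - \tfrac{1}{2}\mathbf{1}\mathbf{1}^T\bigr)$ and applies the matrix determinant lemma to obtain the closed form $\det\Delta = (-2)^p\bigl(\prod_i s_i\bigr)\bigl(1-\tfrac{p}{2}\bigr)$. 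Your argument is shorter, yields an explicit determinant (and hence makes the singularity at $p=2$ completely transparent), and avoids the somewhat artificial inductive scaffolding---indeed, the paper's inductive step never actually uses the inductive hypothesis. The paper's version, on the other hand, is self-contained in that it does not invoke the matrix determinant lemma and stays at the level of direct linear-dependence arguments, which may be preferable for readers unfamiliar with rank-one update identities.
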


\begin{proof}
The $\{\Delta_j\}_{1\leq j\leq p}$ vectors span $\mathbb{R}^p$ if and only if $\Delta^{(p)} = \begin{bmatrix} \Delta_1& \Delta_2& \cdots& \Delta_p \end{bmatrix}$ is full rank.
For $p=1$, it is trivial that $\rank(\Delta^{(p)})=p$. For $p=3$, let assume 
$$ \Delta_0 = \begin{bmatrix} a_1 \\ a_2 \\ a_3 \end{bmatrix}\,,$$
then
$$ \Delta^{(3)} = \begin{bmatrix} -a_1 & a_1 & a_1 \\ a_2 &-a_2 & a_2\\ a_3&a_3 & -a_3 \end{bmatrix}\,.$$
We know that $\det{\Delta^{(3)}}=4a_1a_2a_3 \neq 0$, for all $a_i\neq 0$. Since $a_i=\pm1$, $\Delta^{(3)}$ is full rank. Now, assume we know that $\rank(\Delta^{(k)})=k$. We need to prove that $\rank(\Delta^{(k+1)})=k+1$. From the procedure in Algorithm \ref{sp_pp}, we have
$$ \Delta^{(k+1)} = \begin{bmatrix} -a_1 & \hdots & a_1& a_1 \\ \vdots &\ddots & \vdots & \vdots \\ a_k &\hdots & -a_k & a_k \\ a_{k+1}& \hdots & a_{k+1}& -a_{k+1} \end{bmatrix}\,.$$
Since adding a row to a matrix does not change the rank of that matrix, it is clear that 
$$ \rank(\Delta^{\prime(k)})= \rank \left( \begin{bmatrix} -a_1 & \hdots & a_1 \\ \vdots &\ddots & \vdots \\ a_k&\hdots & -a_k\\ a_{k+1}& \hdots & a_{k+1} \end{bmatrix} \right) = k \,.$$
Now, we need to prove that $\Delta_{k+1}$ (the last column of $\Delta^{(k+1)}$) is not in the span of the columns of the matrix $\Delta^{\prime(k)}$. Let $\vec{c}_i$ represent the columns of $\Delta^{\prime(k)}$. If $\Delta_{k+1}$ is not independent of the columns, $\vec{c}_i$, then there should exists a nonzero vector, say $\vec{\alpha}\neq \vec{0}$, such that 
$$ \sum_{i=1}^k \alpha_i c_i + \alpha_{k+1} \Delta_{k+1} = \vec{0}\,,$$
which results in
$$ \sum_{i=1}^{k+1} \alpha_i - 2 \alpha_j= 0, \ \ j=1,\cdots,k+1 $$
It is easy to show that there is not any real valued $\vec{\alpha}$ which satisfies these conditions. Thus, $\Delta_{k+1}$ is independent of the columns of $\Delta^{(k+1)}$, and adding the $\Delta_{k+1}$ column, which generates $\Delta^{(k+1)}$, increases the rank of the matrix by one. Therefore, $\rank \Delta^{(k+1)} = k+1$, which proves the lemma by induction. 
\end{proof}

\begin{lemma} \label{lem_trBound}
Given the suggested procedure for choosing $\Delta$ vectors, if $\Delta_0 = \vec{1}$, where $\vec{1}$ is a vector that all its elements are one, then for all $p\neq 2$ and $M \geq 4$, then $\trace (\Delta \Delta^T)^{-1} \leq \frac{p}{4}$ and $\trace (\Delta^T \Delta)^{-1} \leq \frac{p}{4}$.
\end{lemma}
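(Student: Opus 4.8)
The plan is to exploit the rigid structure that the choice $\Delta_0=\vec{1}$ forces on the perturbation vectors. With this choice the sign-flip rule gives $\Delta_j=\vec{1}-2\vec{e}_j$, so any two \emph{distinct} direction vectors satisfy $\langle\Delta_i,\Delta_j\rangle=p-4+4\delta_{ij}$. Writing $J_n=\vec{1}\vec{1}^T$ for the $n\times n$ all-ones matrix, this says that the Gram matrix of any $n$ distinct directions is $(p-4)J_n+4I_n$, and that over one full round-robin cycle the perturbation matrix is $\Delta^{(p)}=J_p-2I_p$. Both trace bounds will then follow from the spectrum of matrices of the form $(p-4)J_n+4I_n$, which I can read off directly from the eigenvalues of $J_n$ (namely $n$ with multiplicity one, on $\vec{1}$, and $0$ with multiplicity $n-1$).

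First I would treat the underdetermined regime $M\le p$, in which the algorithm uses $(\Delta^T\Delta)^{-1}$ and each direction appears at most once. Here $\Delta^T\Delta=(p-4)J_M+4I_M$, whose eigenvalues are $(p-4)M+4$ once and $4$ with multiplicity $M-1$, so $\trace(\Delta^T\Delta)^{-1}=\frac{1}{(p-4)M+4}+\frac{M-1}{4}$. Because $M\ge 4$ together with $M\le p$ forces $p\ge 4$, the first term is at most $\tfrac14$ and the second is at most $\tfrac{p-1}{4}$, and these sum to exactly $\tfrac{p}{4}$; this is the desired bound, and it is attained at $M=p$.

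Next I would handle the overdetermined regime, analyzing $\Delta\Delta^T$ over whole cycles, $M=kp$. Since the repeated resampling returns the same $\Delta_0=\vec{1}$, the columns repeat in identical blocks, giving $\Delta\Delta^T=k\,\Delta^{(p)}(\Delta^{(p)})^T=k(J_p-2I_p)^2=k\left[(p-4)J_p+4I_p\right]$. Its eigenvalues are $k(p-2)^2$ on $\vec{1}$ and $4k$ on the orthogonal complement, whence $\trace(\Delta\Delta^T)^{-1}=\frac{1}{k}\left(\frac{1}{(p-2)^2}+\frac{p-1}{4}\right)$. This is decreasing in $k$, so it suffices to check the smallest admissible number of cycles: for $p\ge 4$ the inequality $(p-2)^2\ge 4$ already makes $k=1$ (that is, $M=p$) work, while for $p\in\{1,3\}$ the smallest multiple of $p$ exceeding $4$ supplies exactly the number of cycles needed, which is precisely what the hypothesis $M\ge 4$ encodes (indeed the bound $\tfrac{p}{4}$ is met with equality at these minimal values). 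The exclusion $p\ne 2$ enters here: $(p-2)^2=0$ makes $J_p-2I_p$ singular, so no inverse exists.

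The step I expect to be the real obstacle is the case where $M$ is \emph{not} a multiple of $p$, so the $p$ directions are used unequally, say $m_j$ times with $\sum_j m_j=M$. Then the clean identity breaks and $\Delta\Delta^T=MJ_p-2(\vec{1}\,\vec{m}^T+\vec{m}\,\vec{1}^T)+4\diag(\vec{m})$ no longer has $\vec{1}$ as an eigenvector, so the trace of its inverse cannot simply be read off; in fact one can see that unbalanced counts (for example $p=3$, $M=4$) can violate $\tfrac{p}{4}$, which indicates that the natural hypothesis is that $M$ be a multiple of $p$, i.e.\ that the round-robin is run for whole cycles. I would therefore either state the lemma under that full-cycle assumption (the regime the construction actually produces and in which $\tfrac{p}{4}$ is sharp), or else supply a Cauchy-interlacing/convexity argument showing that the balanced assignment $|m_i-m_j|\le 1$ minimizes $\trace(\Delta\Delta^T)^{-1}$ over all counts summing to $M$, reducing the general case to the full-cycle computation carried out above.
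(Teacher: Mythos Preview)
Your route differs from the paper's. The paper does not compute eigenvalues; it starts from the tautology
\[
p=\trace\bigl((\Delta\Delta^T)(\Delta\Delta^T)^{-1}\bigr)=\sum_{i=1}^{M}\Delta_i^T(\Delta\Delta^T)^{-1}\Delta_i,
\]
substitutes $\Delta_i=\vec{1}-2\vec{e}_i$, and collapses the sum to $(M-4)\,\vec{1}^T(\Delta\Delta^T)^{-1}\vec{1}+4\trace(\Delta\Delta^T)^{-1}$; the hypothesis $M\ge 4$ then makes the first term nonnegative and the bound drops out. This trace--plus--PSD argument is shorter than your explicit spectral computation of $(p-4)J+4I$, but it tacitly uses $\sum_{i=1}^{M}\vec{e}_i=\vec{1}$ and $\sum_{i=1}^{M}\vec{e}_i\vec{e}_i^T=I$, i.e.\ that the $M$ flip-indices hit each coordinate exactly once. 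That is literally $M=p$, so the non-multiple case you isolated is glossed over in the paper just as you suspected. Your $p=3$, $M=4$ example is a genuine counterexample (one checks $\trace(\Delta\Delta^T)^{-1}=\tfrac54>\tfrac34$), so the issue is not a gap in \emph{your} argument but in the lemma as stated; your proposed fix --- restrict to full round-robin cycles, which is what the construction naturally produces --- is the correct repair, and your eigenvalue computation then proves it cleanly.

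For the second inequality the paper simply declares the argument ``very similar'' and omits it. Your explicit calculation of $\Delta^T\Delta=(p-4)J_M+4I_M$ with its spectrum $\{(p-4)M+4,\,4,\dots,4\}$ is the right way to make that half precise, and your observation that $M\ge 4$ together with $M\le p$ forces $p\ge 4$ (hence both eigenvalues $\ge 4$) is exactly what is needed.
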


\begin{proof}
Using $\Delta_0 = \vec{1}$, we have $\Delta_i = \vec{1}-2\vec{e}_i$. Since $(\Delta \Delta^T)$ is a symmetric full rank matrix, we can write it as its eigenvalues-decomposition, given by:
$$ \Delta \Delta^T = Q \Lambda Q^T\,,$$
where,
$$ Q = \begin{bmatrix}  \vec{v}_1 &  \vec{v}_2 & \hdots & \vec{v}_p \end{bmatrix}\,,$$
and $\vec{v}_i$ is the eigenvector corresponding to eigenvalue $\lambda_i$. Then,
$$ \left(\Delta \Delta^T \right)^{-1}= Q \Lambda^{-1} Q^T\,.$$
We know
\begin{equation}
\begin{aligned}
p &= \trace(\left(\Delta \Delta^T \right) \left(\Delta \Delta^T \right)^{-1}) = \sum_{i=1}^M \Delta_i^T \left(\Delta \Delta^T \right)^{-1} \Delta_i \\
&= \sum_{i=1}^M \left[ \left(\sum_{j=1}^p \vec{v}_j^T \Delta_i \vec{v}_j^T\right) \left(\sum_{k=1}^p \frac{1}{\lambda_k}\vec{v}_k \Delta_i^T \vec{v}_k \right) \right] \\
&=\sum_{i=1}^M \left[ \sum_{j=1}^p \frac{1}{\lambda_j} \vec{v}_j^T \Delta_i \Delta_i^T \vec{v}_j \right] \\
&=\sum_{j=1}^p \frac{1}{\lambda_j} \sum_{i=1}^M \left[ \vec{v}_j^T (\vec{1}-2\vec{e}_i) (\vec{1}-2\vec{e}_i)^T \vec{v}_j \right] \\
&= (M-4) \left(\vec{1}^T \left(\Delta \Delta^T \right)^{-1} \vec{1} \right) + 4\trace \left(\Delta \Delta^T \right)^{-1} \\
&= \trace \left( \left((M-4) \vec{1}\vec{1}^T+4I\right) \left(\Delta \Delta^T \right)^{-1} \right)\,.
\end{aligned}
\end{equation}
Since $M-4\geq 0$ and $\vec{1}\vec{1}^T \succeq 0$, then $\left((M-4) \vec{1}\vec{1}^T+4I\right) \left(\Delta \Delta^T \right)^{-1} \succeq 4 \left(\Delta \Delta^T \right)^{-1}$. Therefore,
$$\trace \left( \left(\Delta \Delta^T \right)^{-1} \right) \leq \frac{p}{4} \,.$$

Proving $\trace (\Delta^T \Delta)^{-1} \leq \frac{p}{4}$ is very similar to the above proof. Due to this similarity, the proof is skipped here.
\end{proof}

Now let $y=L(\theta)$ and $y^i=L(\theta+c\Delta_i)$ represent the noisy measurements at $\theta$ and $\theta+c\Delta_i$ respectively. Assume these measurements can be expressed as normal random variables as $y \sim \mathcal{N}(\mu,\sigma^2)$ and $y^i \sim \mathcal{N}(\mu_i,\sigma^2)$. It is assumed that the noise variance is constant over the whole state space, $\theta$. 

\begin{lemma} \label{lem_expVal}
Given $y \sim \mathcal{N}(\mu,\sigma^2)$ and $y^i \sim \mathcal{N}(\mu_i,\sigma^2)$, if $M\geq p$, then the expected value of the gradient from Algorithm \ref{sp_pp} is equal to the gradient $\vec{g}$.
\end{lemma}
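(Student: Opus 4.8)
The plan is to compute the expectation of $\hat{\vec{g}}$ over the measurement noise, holding the realized perturbation directions fixed, and to show that it collapses to $\vec{g}$ through the normal-equations structure of \eqref{MLS_grad}. First I would fix a realization of $\Delta = \begin{bmatrix} \Delta_1 & \Delta_2 & \hdots & \Delta_M \end{bmatrix}$ produced by Algorithm \ref{sp_pp}. Because $M \geq p$ and the generated vectors span $\mathbb{R}^p$ by the preceding spanning lemma (valid for $p \neq 2$), the Gram matrix $\Delta \Delta^T$ is invertible, so the estimator \eqref{MLS_grad} is well defined and the matrix factor $\frac{1}{c}\left(\Delta \Delta^T\right)^{-1}\Delta$ is a deterministic constant once $\Delta$ is fixed.

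Next I would exploit linearity of expectation. Since $\hat{\vec{g}}$ depends linearly on the data vector $\begin{bmatrix} \delta f_1 & \hdots & \delta f_M \end{bmatrix}^T$ with $\delta f_i = y^i - y$, the deterministic factor may be pulled outside the expectation, so the correlation introduced by the shared base measurement $y$ is irrelevant and only the means contribute. Using $E\{y\} = \mu = L(\theta)$ and $E\{y^i\} = \mu_i = L(\theta + c\Delta_i)$, I obtain $E\{\delta f_i\} = \mu_i - \mu = L(\theta + c\Delta_i) - L(\theta)$, which eliminates the Gaussian noise entirely.

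Then I would linearize the continuously-differentiable mean about $\theta$, writing $L(\theta + c\Delta_i) - L(\theta) = c\,\vec{g}^T \Delta_i + o(c)$, so that to first order $E\{\delta f_i\} = c\,\Delta_i^T \vec{g}$ and hence the stacked vector satisfies
\[ E\left\{\begin{bmatrix} \delta f_1 & \delta f_2 & \hdots & \delta f_M \end{bmatrix}^T\right\} = c\,\Delta^T \vec{g}. \]
Substituting into \eqref{MLS_grad} and cancelling the scalar and the Gram matrix gives
\[ E\{\hat{\vec{g}}\} = \frac{1}{c}\left(\Delta \Delta^T\right)^{-1}\Delta\left(c\,\Delta^T \vec{g}\right) = \left(\Delta \Delta^T\right)^{-1}\left(\Delta \Delta^T\right)\vec{g} = \vec{g}, \]
which is the claimed identity.

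I expect the main obstacle to be conceptual rather than computational: clarifying the precise sense of the equality. The identity is exact when the mean $L$ is linear over the perturbation stencil, and holds to first order in the perturbation size $c$ for a general continuously-differentiable mean; this is consistent with the approximations ($\approx$) already used to derive \eqref{gradLinEq} and \eqref{MLS_grad}. The only genuine requirements are the invertibility of $\Delta \Delta^T$, inherited from the spanning lemma, and the zero-mean structure of the additive noise, which makes the noise drop out under expectation regardless of how it is correlated across the $M$ comparisons.
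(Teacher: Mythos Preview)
Your proposal is correct and follows essentially the same route as the paper: compute $\mathbb{E}[\delta f_i]=\mu_i-\mu=c\,\vec{g}^T\Delta_i$ (to first order in $c$), stack, and cancel $(\Delta\Delta^T)^{-1}(\Delta\Delta^T)$ in \eqref{MLS_grad}. You are simply more explicit than the paper about the invertibility of $\Delta\Delta^T$ via the spanning lemma and about the linearization being the source of the approximation, but there is no methodological difference.
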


\begin{proof}

Based on our assumption, 
$$ \mathbb{E} \left[ \delta f_i \right] = \mu_i - \mu \,.$$
For small $c$, we have
$$ \vec{g}^T \Delta_i = \frac{\mu_i - \mu}{c}\,. $$
Then
$$ \mathbb{E} \left[ \delta f_i \right] =c \vec{g}^T \Delta_i \,.$$
If $M\geq p$, using \eqref{MLS_grad}, we have
\begin{equation} \label{lem_eq2} 
\begin{aligned} 
\mathbb{E} [\hat{\vec{g}}] &= \frac{1}{c} \left(\Delta \Delta^T \right)^{-1} \Delta \begin{bmatrix}  \mathbb{E} \left[ \delta f_1\right] &  \mathbb{E} \left[ \delta f_2 \right]& \hdots &  \mathbb{E} \left[ \delta f_M\right]\end{bmatrix}^T \\
&= \frac{1}{c} \left(\Delta \Delta^T \right)^{-1} \Delta c \begin{bmatrix}  \Delta_1^T \\  \Delta_2^T \\ \vdots \\  \Delta_M^T \end{bmatrix} \vec{g} \\
&=  \left(\Delta \Delta^T \right)^{-1}  \left(\Delta \Delta^T \right) \vec{g} = \vec{g}\,.
\end{aligned}
\end{equation}
\end{proof}

\begin{theorem}
If the rounds of computation, $M$, satisfies
\begin{equation} \label{Nparallel}
M \geq \max \left\{p,\frac{\sigma^2 p}{c^2\epsilon^2}\right\}\,,
\end{equation}
then the error in estimated gradient from Algorithm \ref{sp_pp} is bounded by
$$ \mathbb{E} \left[ \| \hat{\vec{g}}- \vec{g}\| \right] \leq \epsilon \,.$$
\end{theorem}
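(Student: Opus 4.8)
The plan is to bound the expected error norm by the square root of the mean-squared error, use the unbiasedness established in \cref{lem_expVal}, and then control the resulting covariance through the trace estimate of \cref{lem_trBound}. First I would pass from the expected norm to the expected squared norm by Jensen's inequality (concavity of the square root):
\begin{equation}
\mathbb{E}\left[\|\hat{\vec{g}} - \vec{g}\|\right] \leq \sqrt{\mathbb{E}\left[\|\hat{\vec{g}} - \vec{g}\|^2\right]}\,,
\end{equation}
so it suffices to show $\mathbb{E}[\|\hat{\vec{g}} - \vec{g}\|^2] \leq \epsilon^2$. Because $M \geq p$, \cref{lem_expVal} gives $\mathbb{E}[\hat{\vec{g}}] = \vec{g}$; the estimator is unbiased, so its mean-squared error equals its total variance, $\mathbb{E}[\|\hat{\vec{g}} - \vec{g}\|^2] = \trace\,\mathrm{Cov}(\hat{\vec{g}})$.

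Next I would propagate the measurement noise through the linear estimator. Writing $\delta f_i = (\mu_i - \mu) + \zeta_i$ with $\zeta_i$ mean-zero, and using $\mu_i - \mu = c\,\vec{g}^T\Delta_i$ from the proof of \cref{lem_expVal}, the deterministic part reproduces $\vec{g}$ exactly, leaving
\begin{equation}
\hat{\vec{g}} - \vec{g} = \frac{1}{c}\left(\Delta\Delta^T\right)^{-1}\Delta\,\vec{\zeta}\,, \qquad \mathrm{Cov}(\hat{\vec{g}}) = \frac{1}{c^2}\left(\Delta\Delta^T\right)^{-1}\Delta\,\mathrm{Cov}(\vec{\zeta})\,\Delta^T\left(\Delta\Delta^T\right)^{-1}\,,
\end{equation}
with $\vec{\zeta} = [\zeta_1,\dots,\zeta_M]^T$. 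Under the stated Gaussian model the $M$ perturbed evaluations are independent while the baseline $y = L(\theta)$ is shared, so $\mathrm{Cov}(\vec{\zeta}) = \sigma^2 I + \sigma^2\vec{1}\vec{1}^T$, the $\sigma^2 I$ piece being the independent contribution of the perturbed runs and the $\sigma^2\vec{1}\vec{1}^T$ piece the fully-correlated contribution of the single baseline. The $\sigma^2 I$ piece contributes $\frac{\sigma^2}{c^2}(\Delta\Delta^T)^{-1}\Delta\Delta^T(\Delta\Delta^T)^{-1} = \frac{\sigma^2}{c^2}(\Delta\Delta^T)^{-1}$, whose trace is the leading term $\frac{\sigma^2}{c^2}\trace(\Delta\Delta^T)^{-1}$.

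Finally I would invoke \cref{lem_trBound}. Since the structured perturbations make $\Delta\Delta^T$ accumulate linearly in the number of rounds, $\trace(\Delta\Delta^T)^{-1}$ decays like $1/M$, and together with the bound $\trace(\Delta\Delta^T)^{-1} \leq p/4$ this yields an estimate of the form $\trace\,\mathrm{Cov}(\hat{\vec{g}}) \leq \sigma^2 p/(c^2 M)$; the hypothesis $M \geq \sigma^2 p/(c^2\epsilon^2)$ then forces the right-hand side to be at most $\epsilon^2$, and the square root from the first step completes the argument. The step I expect to be the main obstacle is the noise bookkeeping: the single shared baseline couples all $M$ differences, so $\vec{\zeta}$ is correlated rather than white, and the $\sigma^2\vec{1}\vec{1}^T$ term adds an extra nonnegative piece $\frac{\sigma^2}{c^2}\|(\Delta\Delta^T)^{-1}\Delta\vec{1}\|^2$ to the trace. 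Demonstrating that this cross term is dominated by, or can be absorbed into, the leading $1/M$ term---and pinning down the precise $M$-dependence of $\trace(\Delta\Delta^T)^{-1}$ underlying \cref{lem_trBound}---is what requires the most care.
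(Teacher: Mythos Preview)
Your route is genuinely different from the paper's. You pass from $\mathbb{E}\|\hat{\vec{g}}-\vec{g}\|$ to the mean-squared error via Jensen and then bound $\trace\,\mathrm{Cov}(\hat{\vec{g}})$ directly; the paper instead writes down the Gaussian law of $\hat{\vec{g}}-\vec{g}$, applies a Chebyshev-type tail bound to get $\Pr[\|\hat{\vec{g}}-\vec{g}\|\geq\epsilon]\leq\tfrac{1}{2}$, and then asserts $\mathbb{E}\|\hat{\vec{g}}-\vec{g}\|\leq\epsilon$ from that alone. Your Jensen step is the sounder of the two: a tail probability of $1/2$ controls the median, not the mean, so the paper's last implication is not justified as written. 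You are also more careful about the noise model: the paper treats the $\delta f_i$ as independent with variance $2\sigma^2$, whereas you correctly note that the single shared baseline $y$ couples them, producing the extra rank-one piece $\sigma^2\vec{1}\vec{1}^T$ in $\mathrm{Cov}(\vec{\zeta})$ and hence the additional term $\frac{\sigma^2}{c^2}\|(\Delta\Delta^T)^{-1}\Delta\vec{1}\|^2$ that must be absorbed.

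The substantive gap---which you yourself flag and which the paper also does not close---is the source of the $1/M$ factor. \cref{lem_trBound} delivers only the $M$-independent bound $\trace(\Delta\Delta^T)^{-1}\leq p/4$; it does not establish the $1/M$ decay you invoke when you write ``$\Delta\Delta^T$ accumulates linearly in the number of rounds.'' That heuristic is plausible for the structured perturbations of Algorithm~\ref{sp_pp}, but it is a separate claim that would have to be proved, and the cross term from the shared baseline would need the same treatment. The paper obtains its $1/M$ by writing the Chebyshev bound as $\trace(\Sigma)/(M\epsilon^2)$ instead of $\trace(\Sigma)/\epsilon^2$, in effect inserting an extra average over $M$ independent copies of $\hat{\vec{g}}$ that the algorithm never forms. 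So your diagnosis that ``pinning down the precise $M$-dependence of $\trace(\Delta\Delta^T)^{-1}$'' is the crux is exactly right; neither your outline nor the paper's argument supplies this step rigorously.
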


\begin{proof}
Using the assumptions of $y \sim \mathcal{N}(\mu,\sigma^2)$ and $y^i \sim \mathcal{N}(\mu_i,\sigma^2)$, the function difference $\delta f_i(\theta)$ can be expressed as:
$$\delta f_i(\theta) \sim \mathcal{N}( \mu_i-\mu,2\sigma^2)\,.$$
Using \eqref{MLS_grad} for estimating the gradient and \cref{lem_expVal}, the error in gradient, $(\hat{\vec{g}} - \vec{g})$, is a normal random variable given by:
$$ (\hat{\vec{g}} - \vec{g}) \sim \mathcal{N} (0,\Sigma) \,,$$
where
$$ \Sigma= \frac{2\sigma^2}{c^2} \left(\Delta \Delta^T \right)^{-1} \Delta \left( \left(\Delta \Delta^T \right)^{-1} \Delta \right)^T = \frac{2\sigma^2}{c^2} \left(\Delta \Delta^T \right)^{-1} \,.$$
After $M$ rounds of computation of $\hat{\vec{g}}$, Chebyshev inequality implies that
$$ \Pr \left[ \| \hat{\vec{g}} - \vec{g} \| \geq \epsilon \right] \leq \frac{\trace(\Sigma)}{M \epsilon^2} = \frac{2 \sigma^2 \trace \left(\left(\Delta \Delta^T \right)^{-1}\right)}{c^2 M \epsilon^2} \,.$$
Using the results of \cref{lem_trBound}, we have
$$ \Pr \left[ \| \hat{\vec{g}} - \vec{g} \| \geq \epsilon \right] \leq \frac{\sigma^2 p}{2c^2 M \epsilon^2} \,.$$
Therefore, if 
$$M \geq \frac{\sigma^2 p}{c^2\epsilon^2}\,,$$
then 
$$ \Pr \left[ \| \hat{\vec{g}} - \vec{g} \| \geq \epsilon \right] \leq \frac{1}{2} \,.$$
Therefore, $\displaystyle \mathbb{E}  \left[ \| \hat{\vec{g}} - \vec{g} \| \right] \leq \epsilon\,.$
\end{proof}

\subsection{Hessian Estimation}

Similar to every other Newton-based algorithm, we need to estimate the Hessian (matrix of second derivatives). In this work, we are suggesting estimation of the reduced Hessian, $H_{\vec{p}}$, instead of full Hessian, $H$. The reduced Hessian, $H_{\vec{p}}$, only conveys the information of the effect of the true Hessian in a specific $\vec{p}$ direction. The reduced Hessian with respect to a given vector $\vec{p}$ satisfies
$$ \vec{p}^T H_{\vec{p}} \vec{p} = \vec{p}^T H \vec{p}\,. $$
Before introducing how we can compute the reduced Hessian, we present a method to estimate the true Hessian in the following lemma.

\begin{restatable}{lemma}{primelemma} \label{Hess_Est_lem}
Suppose $f:\mathbb{R}^n \rightarrow  \mathbb{R}$ is a function taking as input a vector $\vec{x} \in \mathbb{R}^n$. Given a set of small $n$ orthogonal vectors $\{ \vec{d}_1, \vec{d}_2, \cdots, \vec{d}_n \}$, then the true Hessian at a point $\vec{x}$ is given by
\begin{equation} \label{EstHess}
 H(\vec{x}) \approx \sum_{i=1}^n \frac{\Delta G_i}{\| \vec{d}_i \|^2} \vec{d}_i^T \,,
\end{equation}  
where
$$ \Delta G_i = \vec{g}(\vec{x}+\vec{d}_i) - \vec{g}(\vec{x}) \,.$$
\end{restatable}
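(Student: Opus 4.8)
The plan is to combine a first-order Taylor expansion of the gradient with the fact that an orthogonal family of $n$ vectors in $\mathbb{R}^n$ furnishes a resolution of the identity. First I would expand the gradient about $\vec{x}$: since $f$ is twice differentiable, for small $\vec{d}_i$ we have
\[
\vec{g}(\vec{x}+\vec{d}_i) = \vec{g}(\vec{x}) + H(\vec{x})\,\vec{d}_i + o(\|\vec{d}_i\|)\,,
\]
so that $\Delta G_i = \vec{g}(\vec{x}+\vec{d}_i)-\vec{g}(\vec{x}) \approx H\vec{d}_i$. This is the only analytic approximation in the argument; everything after it is exact linear algebra.

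Next I would substitute this into the proposed expression and factor $H$ out on the left:
\[
\sum_{i=1}^n \frac{\Delta G_i}{\|\vec{d}_i\|^2}\vec{d}_i^T \approx \sum_{i=1}^n \frac{H\vec{d}_i\,\vec{d}_i^T}{\|\vec{d}_i\|^2} = H\left(\sum_{i=1}^n \frac{\vec{d}_i\vec{d}_i^T}{\|\vec{d}_i\|^2}\right)\,.
\]
It then remains to show that the bracketed sum is the identity. Setting $\hat{\vec{d}}_i := \vec{d}_i/\|\vec{d}_i\|$, each summand is the rank-one orthogonal projector $\hat{\vec{d}}_i\hat{\vec{d}}_i^T$ onto the line spanned by $\vec{d}_i$. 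Because the $\vec{d}_i$ are mutually orthogonal and nonzero, and there are $n$ of them, the $\{\hat{\vec{d}}_i\}$ form an orthonormal basis of $\mathbb{R}^n$.

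The key step is the resolution of the identity: for any $\vec{v}\in\mathbb{R}^n$, the orthonormal expansion gives $\vec{v} = \sum_i (\hat{\vec{d}}_i^T\vec{v})\,\hat{\vec{d}}_i = \left(\sum_i \hat{\vec{d}}_i\hat{\vec{d}}_i^T\right)\vec{v}$; since this holds for every $\vec{v}$, the matrix $\sum_i \hat{\vec{d}}_i\hat{\vec{d}}_i^T$ equals $I$. Substituting back gives $\sum_{i=1}^n \frac{\Delta G_i}{\|\vec{d}_i\|^2}\vec{d}_i^T \approx H\cdot I = H$, as claimed.

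I do not anticipate a genuine obstacle: the orthogonality hypothesis does all the work and collapses the projector sum exactly to $I$, so the only inexactness is the discarded $o(\|\vec{d}_i\|)$ Taylor remainder, which is exactly what the word ``small'' and the $\approx$ symbol in the statement absorb. If a quantitative version were wanted, the effort would instead go into bounding that remainder via the third derivatives of $f$ and $\max_i\|\vec{d}_i\|$, but for the stated approximate identity that is not needed.
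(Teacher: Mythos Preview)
Your argument is correct and arguably cleaner than the paper's own proof. The paper proceeds row-by-row: it writes $H$ as the stack of row vectors $\partial \vec{g}_i/\partial \vec{x}$ and then approximates each such row by the directional finite-difference formula $\partial \vec{g}_i/\partial \vec{x} \approx \sum_j \frac{\vec{g}_i(\vec{x}+\vec{d}_j)-\vec{g}_i(\vec{x})}{\|\vec{d}_j\|^2}\vec{d}_j^T$, after which the rows are reassembled into the claimed sum. That finite-difference formula is itself justified by exactly the two ingredients you make explicit---a first-order Taylor step and the identity $\sum_j \hat{\vec{d}}_j\hat{\vec{d}}_j^T=I$---but the paper leaves this implicit under the phrase ``finite difference method.'' Your route applies Taylor once to the full gradient vector, factors $H$ out, and then invokes the resolution of the identity directly at the matrix level; this makes the role of the orthogonality hypothesis transparent and avoids the component-wise bookkeeping. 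The two arguments are equivalent in content, but yours is more self-contained.
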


The proof of this lemma can be found in the appendix (see \cref{sec:proof_of_mylemma}).

\begin{definition}
The reduced Hessian with respect to a given vector $\vec{d}$ is defined as:
\begin{equation} \label{ReducHess}
 H_{\vec{d}} (\vec{x}) = \frac{1}{2 \| \vec{d} \|^2}\left[ (\vec{g}(\vec{x}+\vec{d}) - \vec{g}(\vec{x})) \vec{d}^T +  \vec{d} (\vec{g}(\vec{x}+\vec{d}) - \vec{g}(\vec{x}))^T\right]\,. 
\end{equation}
\end{definition}

\begin{theorem}
The reduced Hessian defined in \eqref{ReducHess} satisfies the following condition:
$$ \vec{d}^T H_{\vec{d}} \vec{d} = \vec{d}^T H \vec{d}\,. $$
\end{theorem}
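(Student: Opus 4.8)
The plan is to compute $\vec{d}^T H_{\vec{d}} \vec{d}$ directly from the definition \eqref{ReducHess}, observe that the symmetric rank-two structure collapses to a single scalar, and then invoke the first-order relation between the gradient difference and the true Hessian to identify that scalar with $\vec{d}^T H \vec{d}$.

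First I would abbreviate the gradient difference by $\Delta\vec{g} := \vec{g}(\vec{x}+\vec{d}) - \vec{g}(\vec{x})$, so that $H_{\vec{d}} = \frac{1}{2\|\vec{d}\|^2}\left(\Delta\vec{g}\,\vec{d}^T + \vec{d}\,\Delta\vec{g}^T\right)$. Sandwiching this between $\vec{d}^T$ on the left and $\vec{d}$ on the right, I would use the elementary facts that $\vec{d}^T\vec{d} = \|\vec{d}\|^2$ is a scalar and that $\vec{d}^T\Delta\vec{g}$ and $\Delta\vec{g}^T\vec{d}$ are one and the same scalar. Each of the two terms then picks up exactly one factor of $\|\vec{d}\|^2$, which cancels the $\|\vec{d}\|^2$ in the denominator, leaving $\vec{d}^T H_{\vec{d}} \vec{d} = \tfrac{1}{2}(\vec{d}^T\Delta\vec{g} + \vec{d}^T\Delta\vec{g}) = \vec{d}^T\Delta\vec{g}$. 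This reduction is purely algebraic and is the heart of the computation.

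Then I would close the gap by relating $\Delta\vec{g}$ back to $H$. By a first-order Taylor expansion of the gradient map around $\vec{x}$, $\vec{g}(\vec{x}+\vec{d}) = \vec{g}(\vec{x}) + H(\vec{x})\vec{d} + o(\|\vec{d}\|)$, so $\Delta\vec{g} \approx H\vec{d}$ for small $\vec{d}$, which is precisely the secant approximation already underlying \cref{Hess_Est_lem}. Substituting yields $\vec{d}^T H_{\vec{d}} \vec{d} = \vec{d}^T\Delta\vec{g} \approx \vec{d}^T H \vec{d}$, the claimed identity.

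The computation itself presents no genuine obstacle; the only point worth flagging is the sense in which the equality holds. The reduction $\vec{d}^T H_{\vec{d}}\vec{d} = \vec{d}^T\big(\vec{g}(\vec{x}+\vec{d})-\vec{g}(\vec{x})\big)$ is exact, while its agreement with $\vec{d}^T H\vec{d}$ is exact only up to the first-order Taylor error (equivalently, it equals $\vec{d}^T H(\vec{\xi})\vec{d}$ at an intermediate point $\vec{\xi}$), consistent with the approximate equalities used throughout the paper. I would also note that the symmetrization built into \eqref{ReducHess} is exactly what makes $H_{\vec{d}}$ a legitimate symmetric Hessian surrogate while leaving the quadratic form $\vec{d}^T(\cdot)\vec{d}$ unchanged.
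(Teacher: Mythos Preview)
Your argument is correct and reaches the same intermediate identity as the paper, namely $\vec{d}^T H_{\vec{d}}\vec{d} = \big(\vec{g}(\vec{x}+\vec{d})-\vec{g}(\vec{x})\big)^T\vec{d}$, by the same direct expansion of \eqref{ReducHess}. Where you diverge from the paper is in the second step: to identify this scalar with $\vec{d}^T H\vec{d}$, the paper completes $\vec{d}$ to an orthogonal basis $\{\vec{d},\vec{d}_1,\ldots,\vec{d}_{n-1}\}$, invokes \cref{Hess_Est_lem} to write $H$ as a sum of rank-one pieces along those directions, and then uses orthogonality so that $\vec{d}^T(\cdot)\vec{d}$ kills every term except the one along $\vec{d}$. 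You instead apply the first-order Taylor expansion of $\vec{g}$ directly to obtain $\Delta\vec{g}\approx H\vec{d}$. Your route is shorter and more elementary, and it makes transparent that the orthogonal completion in the paper's proof is not really needed; the paper's route, on the other hand, ties the result back to the Hessian-estimation framework of \cref{Hess_Est_lem}. Your remark that the equality holds in the same approximate (first-order) sense as the surrounding finite-difference identities is accurate and worth keeping.
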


\begin{proof}
Suppose $\{ \vec{d}_1, \vec{d}_2, \cdots, \vec{d}_{n-1} \}$ are a set of orthogonal vectors such that $\{ \vec{d}, \vec{d}_1, \vec{d}_2, \cdots, \vec{d}_{n-1} \}$ becomes a set of $n$ orthogonal vectors which spans the entire $\mathbb{R}^n$. Then using \eqref{EstHess}, the Hessian can be estimated as:
\begin{equation}
 H(\vec{x}) \approx \frac{\vec{g}(\vec{x}+\vec{d}) - \vec{g}(\vec{x})}{\| \vec{d} \|^2} \vec{d}^T + \sum_{i=1}^{n-1} \frac{\Delta G_i}{\| \vec{d}_i \|^2} \vec{d}_i^T \,.
\end{equation}  
Since all $\vec{d}_i, i=1, \cdots, n-1$ are perpendicular to $\vec{d}$, we have
$$ \vec{d}^T H \vec{d} = \left[ \vec{g}(\vec{x}+\vec{d}) - \vec{g}(\vec{x}) \right]^T \vec{d} \,.$$
On the other hand, from the definition in \eqref{ReducHess}
$$ \vec{d}^T H_{\vec{d}} \vec{d} = \frac{2 (\vec{g}(\vec{x}+\vec{d}) - \vec{g}(\vec{x}))^T \vec{d} \| \vec{d} \|^2}{2 \| \vec{d} \|^2} = \left[ \vec{g}(\vec{x}+\vec{d}) - \vec{g}(\vec{x}) \right]^T \vec{d} = \vec{d}^T H \vec{d} \,.$$

\end{proof}

\color{black}
\subsection{PSPO Algorithm}

Algorithm \ref{conjugateSP} presents a step-by-step summary of the proposed approach in this paper. This algorithm is a conjugate gradient type algorithm which uses the PSP algorithm to estimate the gradient and Hessian at each iteration. The stopping criterion in this algorithm can be defined by the size of the gradient at the final point, the update size, or even the number of iterations.

\begin{algorithm}[!h]
\begin{algorithmic}
\STATE{Initialize $i=0$, $k=0$ and initial guess $\theta_0$}
\STATE{Initialize update direction $\vec{d} =$ null}
\STATE{Run PSP to compute $\hat{\vec{g}}_0(\theta_0)$ with $M=1$}
\STATE{$\vec{r} := -\hat{\vec{g}}_0(\theta_0)$}
\STATE{$\vec{d} := \vec{r}$}
\STATE{$\vec{g}_{new} := \hat{\vec{g}}_0(\theta_0)$}
\WHILE{stopping criteria satisfied}
	\STATE{Set \# of parallel rounds $M$ using error tolerance for this iteration}
 	\STATE{Run PSP to compute $\hat{\vec{g}}_k(\theta_k)$ using computed $M$}
	\STATE{$\displaystyle \vec{r} := -\hat{\vec{g}}_k(\theta_k)$}
	\STATE{$\displaystyle \vec{g}_{old} := \vec{g}_{new}$}
	\STATE{$\displaystyle \vec{g}_{new} :=\hat{\vec{g}}_k(\theta_k)$}
	\STATE{$\displaystyle \beta = \frac{\vec{g}_{new}^T(\vec{g}_{new}-\vec{g}_{old})}{\vec{g}_{old}^T \vec{g}_{old}}$}
	\STATE{$\displaystyle \tilde{\vec{d}} := \tilde{c} \frac{\vec{d}}{\|\vec{d}\|} + \varepsilon \mathbbm{1}_{\vec{d}=0}(\vec{d})$}
	\STATE{$\theta^+_k := \theta_k + \tilde{\vec{d}}$}
	\STATE{$\theta^-_k := \theta_k - \tilde{\vec{d}}$}
 	\STATE{Run PSP to compute $\hat{\vec{g}}(\theta^+_k)$, and $\hat{\vec{g}}(\theta^-_k)$}
	\STATE{$\displaystyle \delta G_k = \hat{\vec{g}}(\theta^+_k) - \hat{\vec{g}}(\theta^-_k)$}	
	\STATE{$\displaystyle \hat{H}_k := H_{\tilde{\vec{d}}}$ from \eqref{ReducHess}} 
	\STATE{$\displaystyle \alpha :=  -\frac{\hat{\vec{g}}_k^T \vec{d}}{\vec{d}^T \hat{H}_k \vec{d}}$}
	\STATE{Update $\displaystyle \theta_{k+1} :=\theta_{k} +\alpha \vec{d}$}
	\STATE{Update direction $\displaystyle \vec{d} :=\vec{r}+\beta \vec{d}$}
	\STATE{$i := i+1$}	
	\IF{$i =p$ or $\vec{r}^T\vec{d} \leq 0$}
      		\STATE $\displaystyle \vec{d} := \vec{r}$\;
		\STATE $\displaystyle i := 0$\;
	\ENDIF	
	\STATE{$k := k+1$}
\ENDWHILE
\STATE{$\theta_{est} := \theta_k$}
\RETURN $\theta_{est}$
\end{algorithmic}
 \caption{PSPO Algorithm} \label{conjugateSP}
\end{algorithm}

\begin{remark}
The PSPO algorithm gives the chance to set the error tolerance for each step. One is able to start a coarse search at first iterations and decrease the error tolerance as the algorithm gets closer to the optimal point (by increasing the number of the computation rounds).
\end{remark}

\begin{remark}
To estimate the Hessian in PSPO, at each iteration we compute a rank-one estimate of the Hessian with respect to the update direction in that iteration, which is called a reduced Hessian in this paper. In the gradient descent algorithms, the Hessian matrix scales the update vector based on the curvature of the objective function for each feature. The idea, here, is to estimate the curvature of the objective function for each feature at each iteration, but only in the direction of the update in  that iteration. In fact, we only need to estimate the effect of the Hessian matrix in the update direction, and we are not interested on the effect of  the Hessian matrix in other directions. 
\end{remark}

\section{Simulation} \label{sec:sims}

\subsection{Toy Example: Minimization of A Quadratic Function}

To demonstrate the main contribution of this paper, we implemented the methodology on a variety of examples. We begin with a simple three-dimensional quadratic function to clearly illustrate the efficiency of the algorithm in the case we know the exact optimal solution. To empirically evaluate the proposed method, we investigated the efficiency of PSPO and SPSA algorithms on a stochastic optimization problem. We use the same initialization condition when comparing two optimization algorithms. We evaluate our proposed method on the following highly noisy objective function
$$ f(\vec{x}) = \left\|\vec{x}-\vec{1}\right\|_2^2+ w, \ \ \vec{x} \in \mathbb{R}^5\,,$$
where, $w$ is a Gaussian noise $w \sim \mathcal{N}(0,3^2)$. This convex objective function is suitable for comparison of different optimizers without worrying about local minimum issues. We rerun each optimization algorithm $200$ times for different initial conditions. According to \cref{PSPOvsSPSA_200Iter_maxIter100}, we found that the PSPO algorithm converges faster than SPSA on average. Note that the maximum number of iteration set to $100$ for both algorithms. The efficiency of the two algorithms is also tested through running the algorithms for a fixed initial condition many times. \cref{PSPOvsSPSA_100Iter} shows the result. In this example, the number of function evaluations per iteration for the PSPO algorithm is twice of the number of function evaluations per iteration for the SPSA algorithm. But, as it can be seen in \cref{PSPOvsSPSA_100Iter}, in case we have access to supercomputers to run the stochastic simulations in parallel, then the PSPO algorithm converges faster.
\begin{figure}[!h]
  \centering
  \subfloat[]{\includegraphics[width=.45\linewidth]{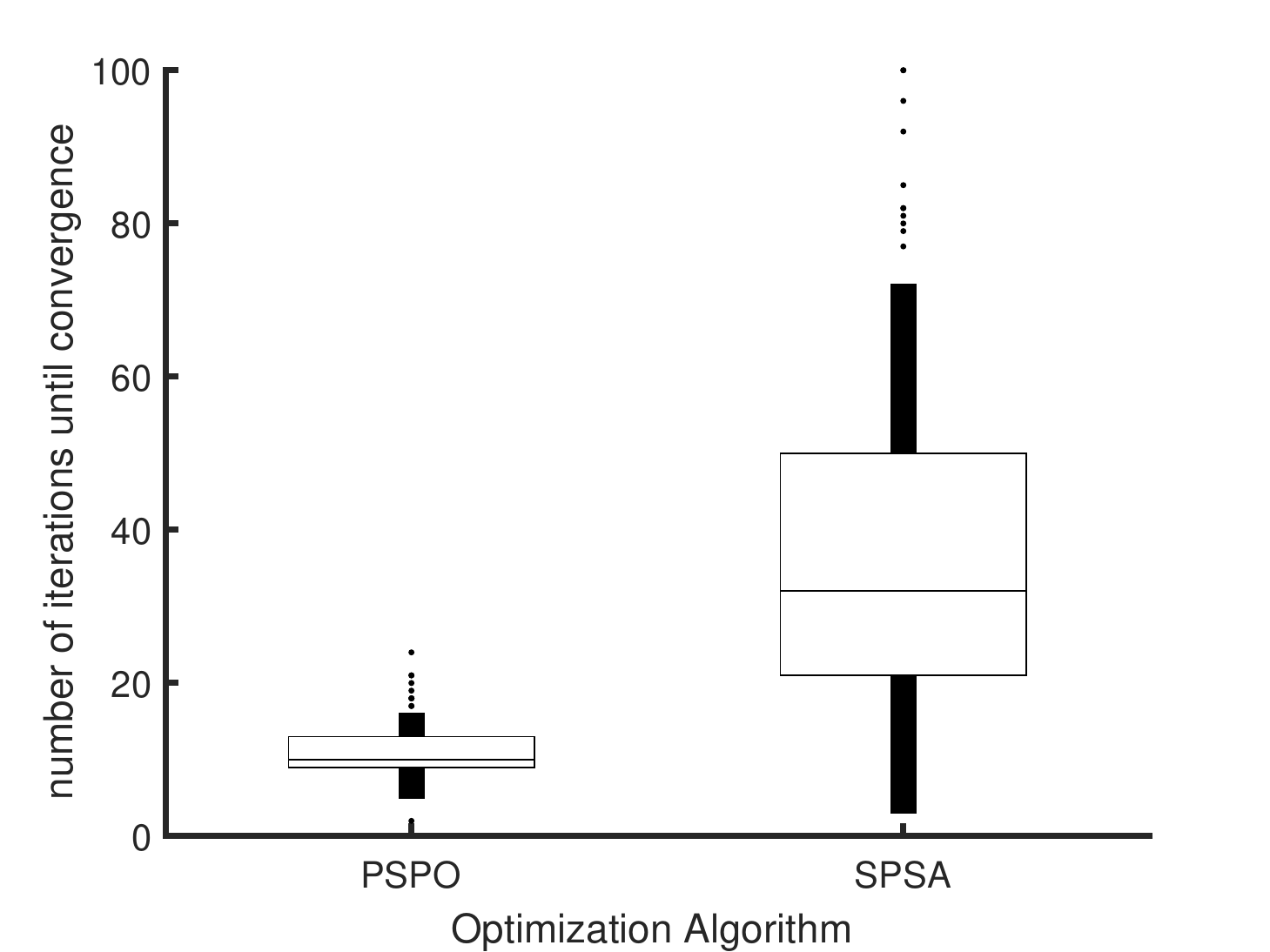} \label{PSPOvsSPSA_200Iter_maxIter100}}
	\hfill  
\subfloat[]{\includegraphics[width=.45\linewidth]{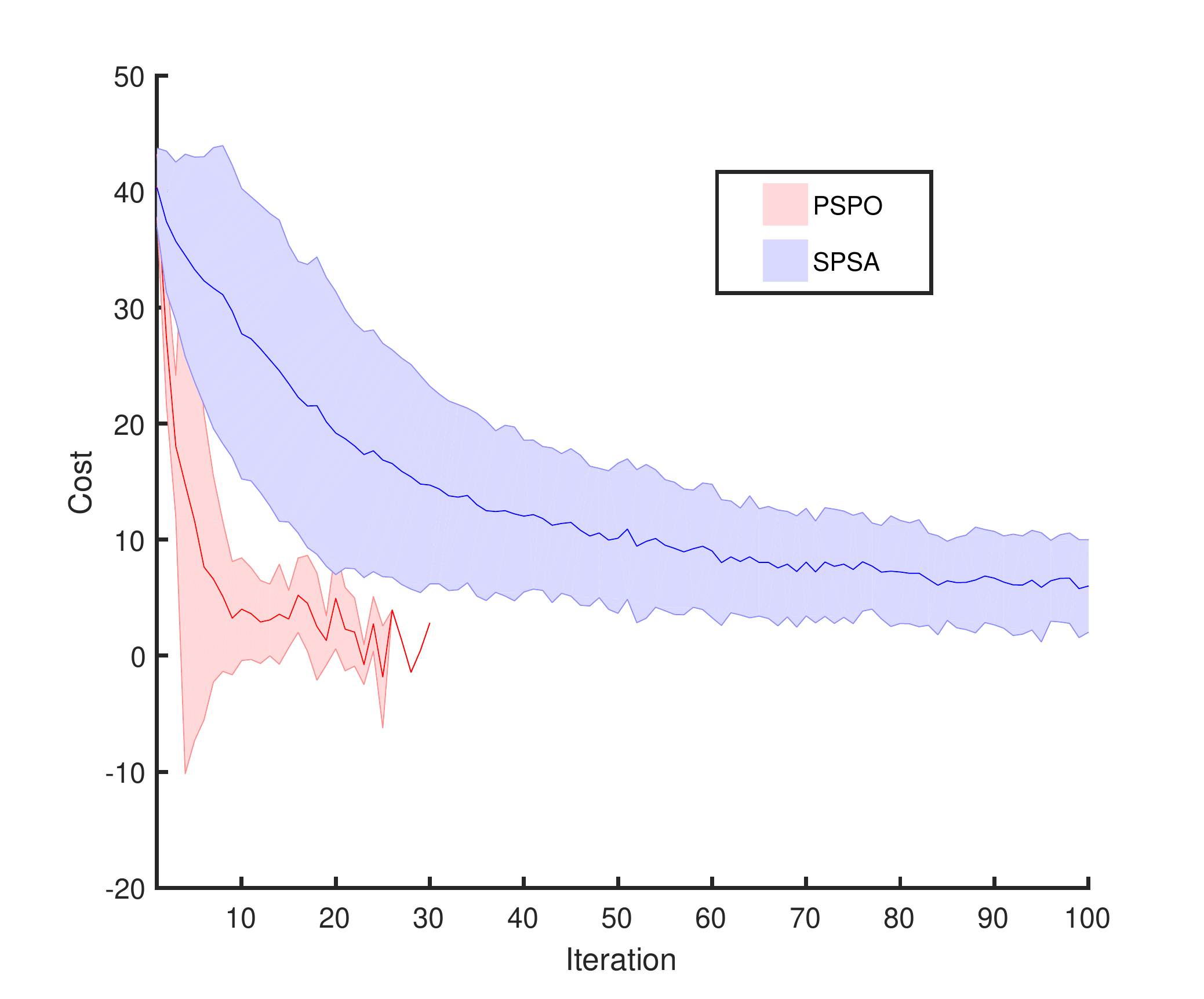} \label{PSPOvsSPSA_100Iter}}
\caption{Comparison of SPSA and PSPO on optimization of a noisy convex objective function. (a) Number of iterations to converge. (b) Incremental decrease of objective function for two algorithms.}
\label{fig:test}
\end{figure}

\subsection{Real Application: Epidemiological Model Calibration}

To demonstrate the efficiency of the PSPO algorithm proposed in this paper, we implemented the methodology on a data set concerns a measles outbreak in a small town in Germany in 1861, which contains 188 infected individuals. This data set, called Hagelloch data set, is very popular in literature because of its completeness and depth of data. \cref{Measles1861} gives the observed clinical data. The data are obtained from \cite{meyer2014spatio}. The objective, here, is finding a good model for the given epidemic data to investigate the properties of the disease spread. To estimate the model parameters, we use maximum likelihood estimate (MLE). Details on how we can formulate this problem as an optimization problem can be found in \cite{alaeddini2017application}.
\begin{figure}[!h]
        \centering
        \includegraphics[width=.5\linewidth]{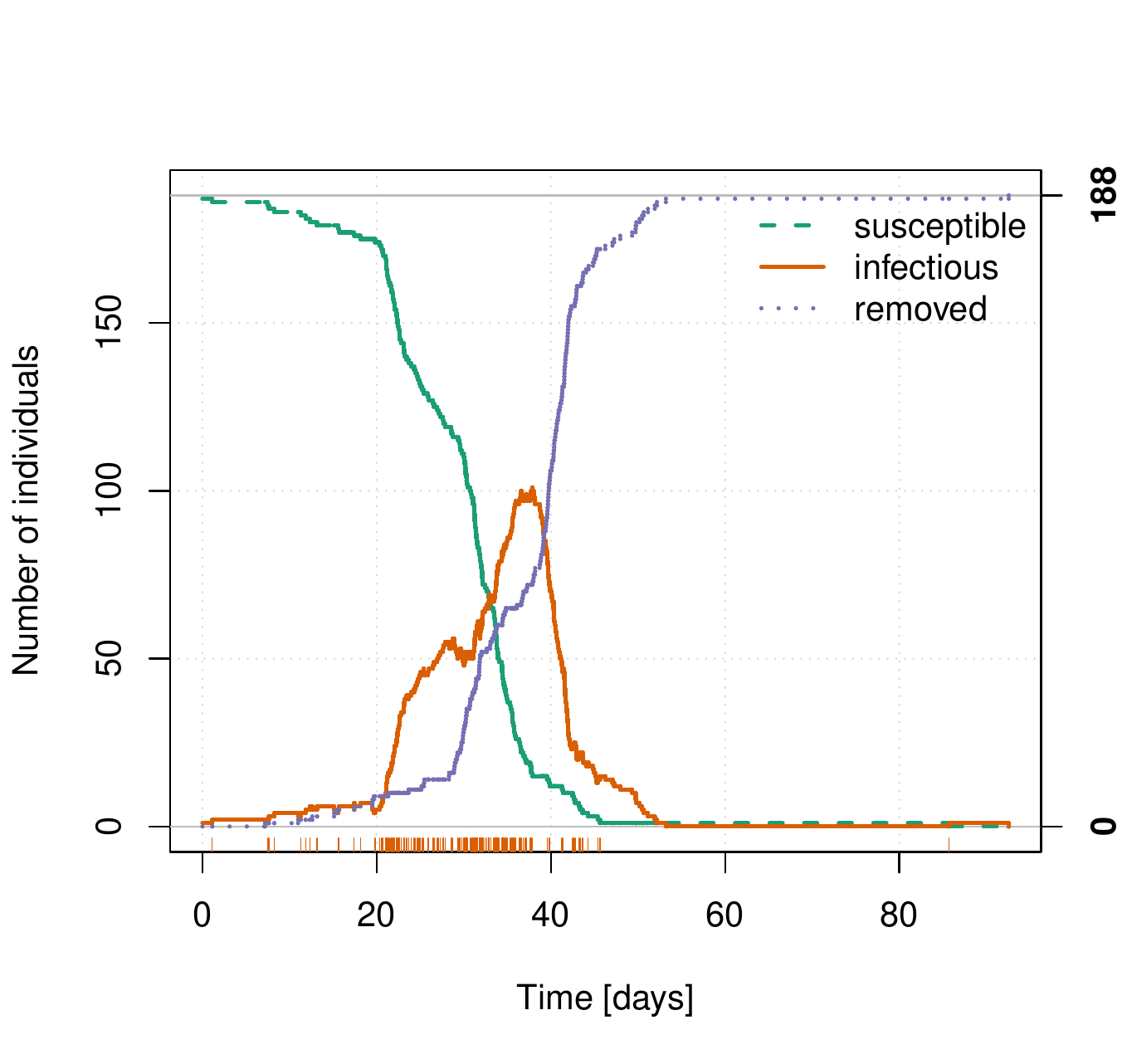}
	\vspace{-5mm}
        \caption{Measles outbreak clinical data. The orange solid line, green dashed line, and dotted blue line represent the number of infected, susceptible, and recovered people respectively.} \label{Measles1861}
\end{figure}

We compare the efficiency of the PSPO algorithm with the conventional SPSA algorithm. To compare these two algorithms, we rerun each optimization algorithms 100 times. \cref{hist19} shows the number of iterations required to converge. As it can be seen in this figure, PSPO algorithm on average needs fewer iterations compared with the conventional SPSA algorithm. The maximum number of iterations for both algorithms set to 30. Note that the number of iterations until convergence set to 30 if the algorithm does not converge after 30 iterations. In order to investigate the performance of the PSPO algorithm as the number of parallel rounds increases, we run the PSPO for this example with different number of parallel computation rounds. \cref{fig:conv_ver_M} shows the number of iterations for convergence as $M$ grows. In this example, again, the number of function evaluations per iteration for the PSPO is twice of the corresponding number for the SPSA algorithm. Having higher number of function evaluations per iteration in this example, we can still see in \cref{hist19} that the number of total function evaluations for PSPO might be less than the total number of function evaluations for SPSA. However, in this paper we do not focus on reducing the number of objective function evaluations and only focus on the faster convergence in case of having access to parallel supercomputers. 
\begin{figure}[!h]
  \centering
  \subfloat[]{\includegraphics[width=.45\linewidth]{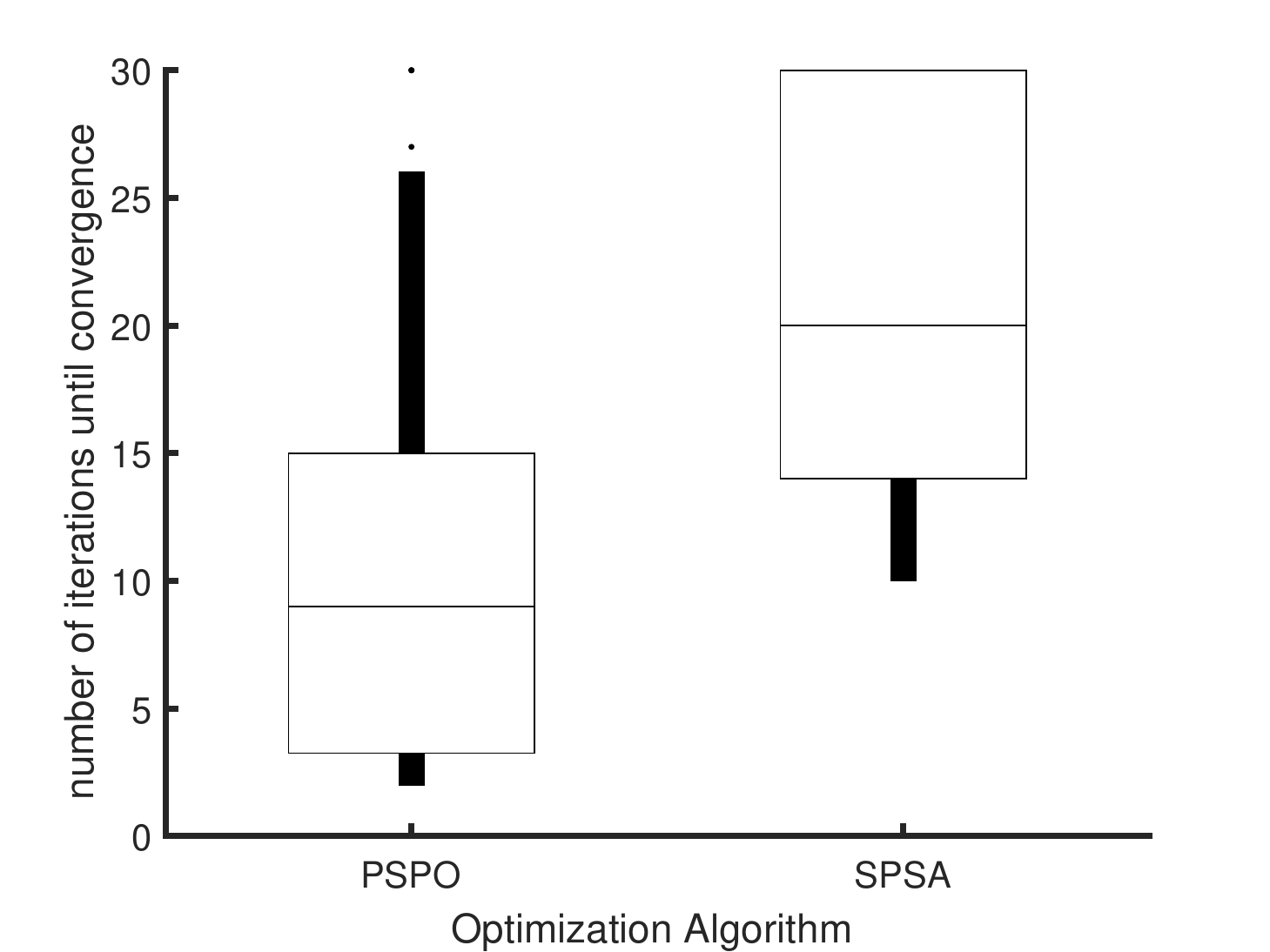} \label{hist19}}
	\hfill  
\subfloat[]{\includegraphics[width=.45\linewidth]{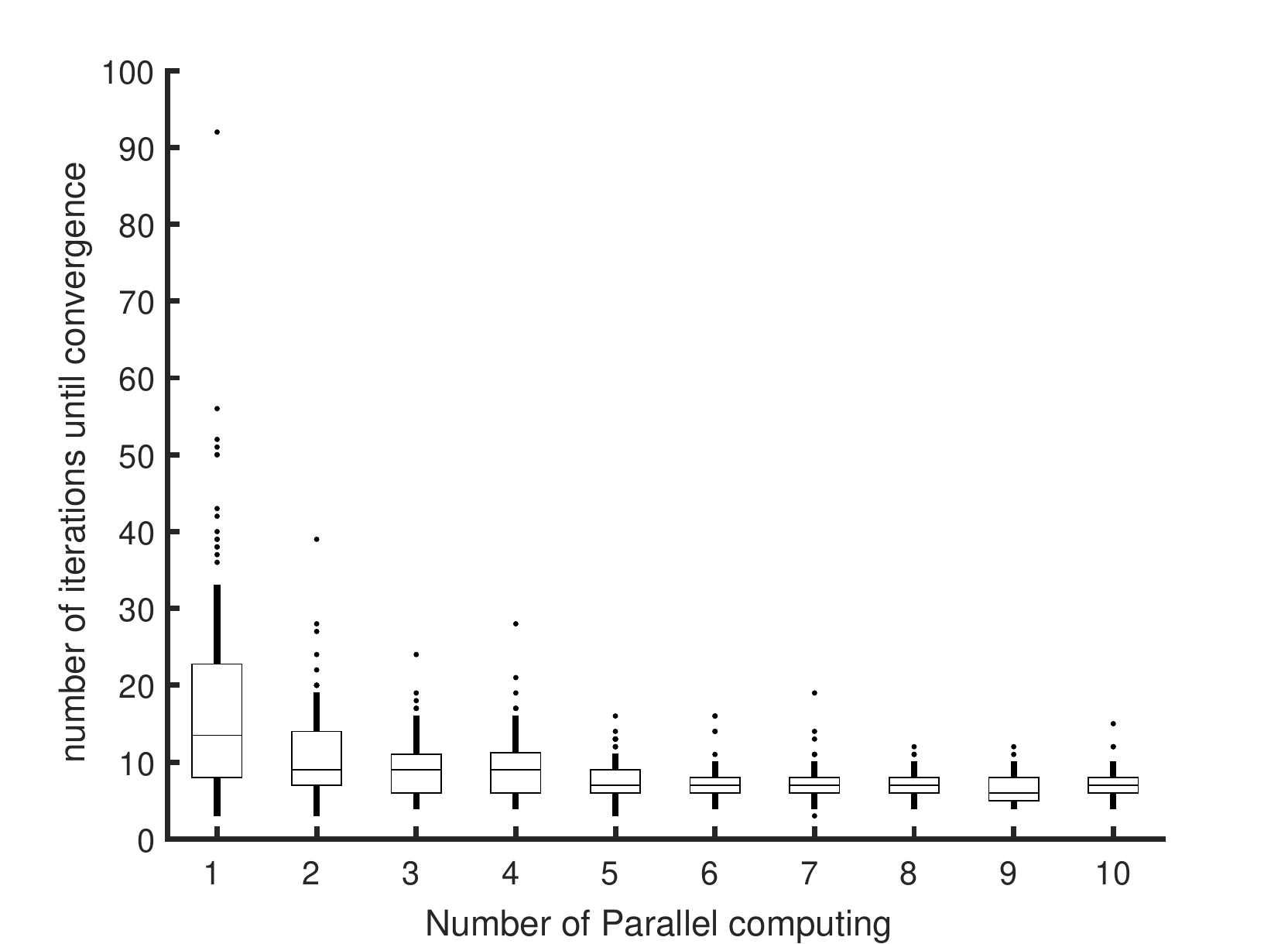} \label{fig:conv_ver_M}}
\caption{(a) Comparison of the number of iterations for convergence for conventional SPSA and PSPO. (b) Number of iterations vs. number of parallel computing rounds, $M$, in PSPO.}
\label{fig:test2}
\end{figure}

\color{black}

\section{Conclusions} \label{sec:conclusion}

In many stochastic optimization problems that noise presents, obtaining an analytical solution is hardly possible. In this paper, we described an algorithm for optimal parameter estimation in discretely observed stochastic model. We have introduced PSPO, which is a simple and computationally efficient algorithm for gradient-based optimization of stochastic objective functions. The comparison between SPSA and PSPO algorithm demonstrated the superiority of the PSPO algorithm in the number of iterations for convergence. Note that, comparing with SPSA, the PSPO algorithm with multiple parallel computations requires additional function evaluations per iteration. In case that we do not have access to powerful parallel computers, in order to prevent unnecessary long processing time per iteration, which results in long total processing time, the number of parallel computations need to be carefully computed using the level of the noise presented in the problem. We found the minimum number of parallel rounds of computation in order to have a bounded error in the estimated gradient. In some cases when we are dealing with a high SNR (signal to noise ratio) problem, a single round of computation might be enough for convergence, but in some other applications that the data are highly noisy the SPSA algorithm requires too many iterations to converge or even diverges. Our method is aimed towards highly uncertain objective functions.


\begin{acks}
The authors thank Bill and Melinda Gates for their active support of this work and their sponsorship through the Global Good Fund. This work was performed at Institute for Disease Modeling with help from colleagues, especially Philip Eckhoff.
\end{acks}

\bibliographystyle{ACM-Reference-Format}
\bibliography{citations}


\begin{thebibliography}{19}


\ifx \showCODEN    \undefined \def \showCODEN     #1{\unskip}     \fi
\ifx \showDOI      \undefined \def \showDOI       #1{#1}\fi
\ifx \showISBNx    \undefined \def \showISBNx     #1{\unskip}     \fi
\ifx \showISBNxiii \undefined \def \showISBNxiii  #1{\unskip}     \fi
\ifx \showISSN     \undefined \def \showISSN      #1{\unskip}     \fi
\ifx \showLCCN     \undefined \def \showLCCN      #1{\unskip}     \fi
\ifx \shownote     \undefined \def \shownote      #1{#1}          \fi
\ifx \showarticletitle \undefined \def \showarticletitle #1{#1}   \fi
\ifx \showURL      \undefined \def \showURL       {\relax}        \fi
\providecommand\bibfield[2]{#2}
\providecommand\bibinfo[2]{#2}
\providecommand\natexlab[1]{#1}
\providecommand\showeprint[2][]{arXiv:#2}

\bibitem[\protect\citeauthoryear{Alaeddini and Klein}{Alaeddini and
  Klein}{2017}]%
        {alaeddini2017application}
\bibfield{author}{\bibinfo{person}{Atiye Alaeddini} {and}
  \bibinfo{person}{Daniel~J. Klein}.} \bibinfo{year}{2017}\natexlab{}.
\newblock \showarticletitle{Application of a second-order stochastic
  optimization algorithm for fitting stochastic epidemiological models}.
\newblock \bibinfo{journal}{\emph{2017 Winter Simulation Conference (WSC)}}
  (\bibinfo{date}{Dec} \bibinfo{year}{2017}), \bibinfo{pages}{2194--2206}.
\newblock


\bibitem[\protect\citeauthoryear{Bhatnagar, Fu, Marcus, Wang,
  et~al\mbox{.}}{Bhatnagar et~al\mbox{.}}{2003}]%
        {bhatnagar2003two}
\bibfield{author}{\bibinfo{person}{Shalabh Bhatnagar},
  \bibinfo{person}{Michael~C Fu}, \bibinfo{person}{Steven~I Marcus},
  \bibinfo{person}{I Wang}, {et~al\mbox{.}}} \bibinfo{year}{2003}\natexlab{}.
\newblock \showarticletitle{Two-timescale simultaneous perturbation stochastic
  approximation using deterministic perturbation sequences}.
\newblock \bibinfo{journal}{\emph{ACM Transactions on Modeling and Computer
  Simulation (TOMACS)}} \bibinfo{volume}{13}, \bibinfo{number}{2}
  (\bibinfo{year}{2003}), \bibinfo{pages}{180--209}.
\newblock


\bibitem[\protect\citeauthoryear{Byrd, Chin, Neveitt, and Nocedal}{Byrd
  et~al\mbox{.}}{2011}]%
        {byrd2011use}
\bibfield{author}{\bibinfo{person}{Richard~H Byrd}, \bibinfo{person}{Gillian~M
  Chin}, \bibinfo{person}{Will Neveitt}, {and} \bibinfo{person}{Jorge
  Nocedal}.} \bibinfo{year}{2011}\natexlab{}.
\newblock \showarticletitle{On the use of stochastic hessian information in
  optimization methods for machine learning}.
\newblock \bibinfo{journal}{\emph{SIAM Journal on Optimization}}
  \bibinfo{volume}{21}, \bibinfo{number}{3} (\bibinfo{year}{2011}),
  \bibinfo{pages}{977--995}.
\newblock


\bibitem[\protect\citeauthoryear{Fr{\"o}berg}{Fr{\"o}berg}{1969}]%
        {froberg1969introduction}
\bibfield{author}{\bibinfo{person}{Carl-Erik Fr{\"o}berg}.}
  \bibinfo{year}{1969}\natexlab{}.
\newblock \bibinfo{booktitle}{\emph{Introduction to numerical analysis}}.
\newblock \bibinfo{publisher}{Addison-Wesley Publishing Company Reading
  Massachusetts}.
\newblock


\bibitem[\protect\citeauthoryear{Fu}{Fu}{1994}]%
        {fu1994optimization}
\bibfield{author}{\bibinfo{person}{Michael~C Fu}.}
  \bibinfo{year}{1994}\natexlab{}.
\newblock \showarticletitle{Optimization via simulation: A review}.
\newblock \bibinfo{journal}{\emph{Annals of operations research}}
  \bibinfo{volume}{53}, \bibinfo{number}{1} (\bibinfo{year}{1994}),
  \bibinfo{pages}{199--247}.
\newblock


\bibitem[\protect\citeauthoryear{Hong and Nelson}{Hong and Nelson}{2009}]%
        {hong2009brief}
\bibfield{author}{\bibinfo{person}{L~Jeff Hong} {and} \bibinfo{person}{Barry~L
  Nelson}.} \bibinfo{year}{2009}\natexlab{}.
\newblock \showarticletitle{A brief introduction to optimization via
  simulation}. In \bibinfo{booktitle}{\emph{Proceedings of the 2009 Winter
  Simulation Conference (WSC)}}. \bibinfo{publisher}{Institute of Electrical
  and Electronics Engineers, Inc.}, \bibinfo{address}{Piscataway, New Jersey},
  \bibinfo{pages}{75--85}.
\newblock


\bibitem[\protect\citeauthoryear{Kiefer, Wolfowitz, et~al\mbox{.}}{Kiefer
  et~al\mbox{.}}{1952}]%
        {kiefer1952stochastic}
\bibfield{author}{\bibinfo{person}{Jack Kiefer}, \bibinfo{person}{Jacob
  Wolfowitz}, {et~al\mbox{.}}} \bibinfo{year}{1952}\natexlab{}.
\newblock \showarticletitle{Stochastic estimation of the maximum of a
  regression function}.
\newblock \bibinfo{journal}{\emph{The Annals of Mathematical Statistics}}
  \bibinfo{volume}{23}, \bibinfo{number}{3} (\bibinfo{year}{1952}),
  \bibinfo{pages}{462--466}.
\newblock


\bibitem[\protect\citeauthoryear{Kocsis and Szepesv{\'a}ri}{Kocsis and
  Szepesv{\'a}ri}{2006}]%
        {kocsis2006universal}
\bibfield{author}{\bibinfo{person}{Levente Kocsis} {and} \bibinfo{person}{Csaba
  Szepesv{\'a}ri}.} \bibinfo{year}{2006}\natexlab{}.
\newblock \showarticletitle{Universal parameter optimisation in games based on
  SPSA}.
\newblock \bibinfo{journal}{\emph{Machine learning}} \bibinfo{volume}{63},
  \bibinfo{number}{3} (\bibinfo{year}{2006}), \bibinfo{pages}{249--286}.
\newblock


\bibitem[\protect\citeauthoryear{Ma, Nie, and Zhang}{Ma et~al\mbox{.}}{2013}]%
        {ma2013solving}
\bibfield{author}{\bibinfo{person}{Jingtao Ma}, \bibinfo{person}{Yu~Marco Nie},
  {and} \bibinfo{person}{H~Michael Zhang}.} \bibinfo{year}{2013}\natexlab{}.
\newblock \showarticletitle{Solving the integrated corridor control problem
  using simultaneous perturbation stochastic approximation}.
\newblock In \bibinfo{booktitle}{\emph{Advances in Dynamic Network Modeling in
  Complex Transportation Systems}}. \bibinfo{publisher}{Springer},
  \bibinfo{pages}{89--113}.
\newblock


\bibitem[\protect\citeauthoryear{Meyer, Held, and H{\"o}hle}{Meyer
  et~al\mbox{.}}{2014}]%
        {meyer2014spatio}
\bibfield{author}{\bibinfo{person}{Sebastian Meyer}, \bibinfo{person}{Leonhard
  Held}, {and} \bibinfo{person}{Michael H{\"o}hle}.}
  \bibinfo{year}{2014}\natexlab{}.
\newblock \showarticletitle{Spatio-temporal analysis of epidemic phenomena
  using the R package surveillance}.
\newblock \bibinfo{journal}{\emph{arXiv preprint arXiv:1411.0416}}
  (\bibinfo{year}{2014}).
\newblock


\bibitem[\protect\citeauthoryear{Polyak and Juditsky}{Polyak and
  Juditsky}{1992}]%
        {polyak1992acceleration}
\bibfield{author}{\bibinfo{person}{Boris~T Polyak} {and}
  \bibinfo{person}{Anatoli~B Juditsky}.} \bibinfo{year}{1992}\natexlab{}.
\newblock \showarticletitle{Acceleration of stochastic approximation by
  averaging}.
\newblock \bibinfo{journal}{\emph{SIAM Journal on Control and Optimization}}
  \bibinfo{volume}{30}, \bibinfo{number}{4} (\bibinfo{year}{1992}),
  \bibinfo{pages}{838--855}.
\newblock


\bibitem[\protect\citeauthoryear{Robbins and Monro}{Robbins and Monro}{1951}]%
        {robbins1951stochastic}
\bibfield{author}{\bibinfo{person}{Herbert Robbins} {and}
  \bibinfo{person}{Sutton Monro}.} \bibinfo{year}{1951}\natexlab{}.
\newblock \showarticletitle{A stochastic approximation method}.
\newblock \bibinfo{journal}{\emph{The annals of mathematical statistics}}
  (\bibinfo{year}{1951}), \bibinfo{pages}{400--407}.
\newblock


\bibitem[\protect\citeauthoryear{Rumelhart, Hinton, and Williams}{Rumelhart
  et~al\mbox{.}}{1985}]%
        {rumelhart1985learning}
\bibfield{author}{\bibinfo{person}{David~E Rumelhart},
  \bibinfo{person}{Geoffrey~E Hinton}, {and} \bibinfo{person}{Ronald~J
  Williams}.} \bibinfo{year}{1985}\natexlab{}.
\newblock \bibinfo{booktitle}{\emph{Learning internal representations by error
  propagation}}.
\newblock \bibinfo{type}{{T}echnical {R}eport}. \bibinfo{institution}{DTIC
  Document}.
\newblock


\bibitem[\protect\citeauthoryear{Spall}{Spall}{1992}]%
        {spall1992multivariate}
\bibfield{author}{\bibinfo{person}{James~C Spall}.}
  \bibinfo{year}{1992}\natexlab{}.
\newblock \showarticletitle{Multivariate stochastic approximation using a
  simultaneous perturbation gradient approximation}.
\newblock \bibinfo{journal}{\emph{IEEE transactions on automatic control}}
  \bibinfo{volume}{37}, \bibinfo{number}{3} (\bibinfo{year}{1992}),
  \bibinfo{pages}{332--341}.
\newblock


\bibitem[\protect\citeauthoryear{Spall}{Spall}{2000}]%
        {spall2000adaptive}
\bibfield{author}{\bibinfo{person}{James~C Spall}.}
  \bibinfo{year}{2000}\natexlab{}.
\newblock \showarticletitle{Adaptive stochastic approximation by the
  simultaneous perturbation method}.
\newblock \bibinfo{journal}{\emph{IEEE transactions on automatic control}}
  \bibinfo{volume}{45}, \bibinfo{number}{10} (\bibinfo{year}{2000}),
  \bibinfo{pages}{1839--1853}.
\newblock


\bibitem[\protect\citeauthoryear{Spall}{Spall}{2009}]%
        {spall2009feedback}
\bibfield{author}{\bibinfo{person}{James~C Spall}.}
  \bibinfo{year}{2009}\natexlab{}.
\newblock \showarticletitle{Feedback and weighting mechanisms for improving
  Jacobian estimates in the adaptive simultaneous perturbation algorithm}.
\newblock \bibinfo{journal}{\emph{IEEE Trans. Automat. Control}}
  \bibinfo{volume}{54}, \bibinfo{number}{6} (\bibinfo{year}{2009}),
  \bibinfo{pages}{1216--1229}.
\newblock


\bibitem[\protect\citeauthoryear{Swisher, Hyden, Jacobson, and
  Schruben}{Swisher et~al\mbox{.}}{2000}]%
        {swisher2000survey}
\bibfield{author}{\bibinfo{person}{James~R Swisher}, \bibinfo{person}{Paul~D
  Hyden}, \bibinfo{person}{Sheldon~H Jacobson}, {and} \bibinfo{person}{Lee~W
  Schruben}.} \bibinfo{year}{2000}\natexlab{}.
\newblock \showarticletitle{A survey of simulation optimization techniques and
  procedures}. In \bibinfo{booktitle}{\emph{Proceedings of Winter Simulation
  Conference}}, Vol.~\bibinfo{volume}{1}. \bibinfo{publisher}{Institute of
  Electrical and Electronics Engineers, Inc.}, \bibinfo{address}{Piscataway,
  New Jersey}, \bibinfo{pages}{119--128}.
\newblock


\bibitem[\protect\citeauthoryear{Tsilifis, Ghanem, and Hajali}{Tsilifis
  et~al\mbox{.}}{2017}]%
        {tsilifis2017efficient}
\bibfield{author}{\bibinfo{person}{Panagiotis Tsilifis},
  \bibinfo{person}{Roger~G Ghanem}, {and} \bibinfo{person}{Paris Hajali}.}
  \bibinfo{year}{2017}\natexlab{}.
\newblock \showarticletitle{Efficient bayesian experimentation using an
  expected information gain lower bound}.
\newblock \bibinfo{journal}{\emph{SIAM/ASA Journal on Uncertainty
  Quantification}} \bibinfo{volume}{5}, \bibinfo{number}{1}
  (\bibinfo{year}{2017}), \bibinfo{pages}{30--62}.
\newblock


\bibitem[\protect\citeauthoryear{Wright and Nocedal}{Wright and
  Nocedal}{1999}]%
        {Nocedal99}
\bibfield{author}{\bibinfo{person}{SJ Wright} {and} \bibinfo{person}{J
  Nocedal}.} \bibinfo{year}{1999}\natexlab{}.
\newblock \bibinfo{booktitle}{\emph{Numerical optimization}}.
  Vol.~\bibinfo{volume}{2}.
\newblock \bibinfo{publisher}{Springer New York}.
\newblock


\end{thebibliography}

\section*{Appendix} \label{sec:proof_of_mylemma}
\subsection*{Proof of \cref{Hess_Est_lem}} 

\primelemma*

\begin{proof}
We know that the Hessian matrix $H$ of $f$ is a square $n \times n$ matrix, defined as follows:
\begin{equation} \label{Hess_FinDiff}
 H = \begin{bmatrix} \frac{\partial^2 f}{\partial \vec{x}_1^2} & \frac{\partial^2 f}{\partial \vec{x}_1 \partial \vec{x}_2} & \hdots & \frac{\partial^2 f}{\partial \vec{x}_1 \partial \vec{x}_n} \\  \frac{\partial^2 f}{\partial \vec{x}_1 \partial \vec{x}_2} & \frac{\partial^2 f}{\partial \vec{x}_2^2} & \hdots & \frac{\partial^2 f}{\partial \vec{x}_2 \partial \vec{x}_n} \\ \vdots & \vdots & \ddots & \vdots \\ \frac{\partial^2 f}{\partial \vec{x}_1 \partial \vec{x}_n} & \frac{\partial^2 f}{\partial \vec{x}_2 \partial \vec{x}_n} & \hdots & \frac{\partial^2 f}{\partial \vec{x}_n^2}\end{bmatrix} = \begin{bmatrix} \frac{\partial \vec{g}_1}{\partial \vec{x}}  \\  \frac{\partial \vec{g}_2}{\partial \vec{x}}  \\ \vdots \\ \frac{\partial \vec{g}_n}{\partial \vec{x}} \end{bmatrix} \,,
\end{equation} 
where $\vec{g}_i$ is the value of the gradient in $\vec{e}_i$ direction, and can be approximated by
$$ \vec{g}_i = \vec{g}^T \vec{e}_i \approx \sum_{j=1}^n \frac{f(\vec{x}+\vec{d}_j) - f(\vec{x})}{\| \vec{d}_j \|^2} \vec{d}_j^T \vec{e}_i \,.$$
The full gradient of $\vec{g}_i$ can also be approximated using the finite difference method as follows:
$$ \frac{\partial \vec{g}_i}{\partial \vec{x}} \approx \sum_{j=1}^n \frac{\vec{g}_i(\vec{x}+\vec{d}_j) - \vec{g}_i (\vec{x})}{\| \vec{d}_j \|^2} \vec{d}_j^T \,.$$
Now, \eqref{Hess_FinDiff} can be written as
\begin{equation} 
\begin{aligned}
 H = \begin{bmatrix} \frac{\partial \vec{g}_1}{\partial \vec{x}}  \\  \frac{\partial \vec{g}_2}{\partial \vec{x}}  \\ \vdots \\ \frac{\partial \vec{g}_n}{\partial \vec{x}} \end{bmatrix} \approx \begin{bmatrix} \sum_{j=1}^n \frac{\vec{g}_1(\vec{x}+\vec{d}_j) - \vec{g}_1 (\vec{x})}{\| \vec{d}_j \|^2} \vec{d}_j^T \\ \sum_{j=1}^n \frac{\vec{g}_2(\vec{x}+\vec{d}_j) - \vec{g}_2 (\vec{x})}{\| \vec{d}_j \|^2} \vec{d}_j^T  \\ \vdots \\ \sum_{j=1}^n \frac{\vec{g}_n(\vec{x}+\vec{d}_j) - \vec{g}_n (\vec{x})}{\| \vec{d}_j \|^2} \vec{d}_j^T \end{bmatrix} = \sum_{j=1}^n \begin{bmatrix} \vec{g}_1(\vec{x}+\vec{d}_j) - \vec{g}_1 (\vec{x}) \\ \vec{g}_2(\vec{x}+\vec{d}_j) - \vec{g}_2 (\vec{x}) \\ \vdots \\ \vec{g}_n(\vec{x}+\vec{d}_j) - \vec{g}_n (\vec{x}) \end{bmatrix} \frac{\vec{d}_j^T}{\| \vec{d}_j \|^2}\,.
\end{aligned}
\end{equation} 
Thus
$$ H \approx \sum_{j=1}^n \frac{\vec{g}(\vec{x}+\vec{d}_j) - \vec{g}(\vec{x}) }{\| \vec{d}_j \|^2} \vec{d}_j^T = \sum_{i=1}^n \frac{\Delta G_i}{\| \vec{d}_i \|^2} \vec{d}_i^T \,.$$
\end{proof}

\end{document}